\newtheorem{theorem}{Theorem}[section]
\newtheorem{lemma}[theorem]{Lemma}
\theoremstyle{definition}
\newtheorem{example}[theorem]{Example}
\theoremstyle{remark}
\theoremstyle{remark}
\newtheorem{problem}[theorem]{Problem}
\numberwithin{equation}{section}
\newtheorem{atheorem}{Theorem}
\newtheorem{corollary}[theorem]{Corollary}
\newtheorem*{corollary*}{Corollary}
\def\th{\theta}
\def\vfi{\varphi}
\newcommand{\si}{\sigma}
\def\Z{\Bbb Z}
\def\N{\Bbb N}
\newcommand{\D}{\mathbb{D}}
\newcommand{\C}{\mathbb{C}}
\newcommand{\Bl}{\Bigl(}
\newcommand{\Br}{\Bigr)}
\newcommand{\Bm}{\Bigr|}
\newcommand{\rr}[1]{re^{i{#1}}}
\newcommand{\inp}{\int_0^{2\pi}}
\def\Ga{\Gamma}
\def\ga{\gamma}
\def\be{\beta}
\def\al{\alpha}
\def\la{\lambda}
\begin{document}

\title{Growth, zero distribution and factorization  of  analytic  functions of moderate  growth in the unit disc}
\author{I. Chyzhykov, S. Skaskiv}







\begin{center}\large \textsc\large Igor Chyzhykov, Severyn Skaskiv \end{center}

\begin{center} \renewcommand{\baselinestretch}{1.3}\bf {\large Growth, zero distribution and factorization  of  analytic  functions of moderate  growth in the unit disc} \end{center}
\vspace{20pt plus 0.5pt}

\begin{abstract}
We give a survey of results on zero distribution and factorization of analytic functions in the unit disc in classes defined by the growth of $\log|f(\rr\theta)|$ in the uniform and integral  metrics. We restrict ourself by the case of finite order of  growth. For a Blaschke product $B$ we obtain a necessary and sufficient condition for the uniform boundedness  of all $p$-means of $\log|B(\rr\theta)|$, where $p>1$.
\end{abstract}


%


Let  $D(z,t)=\{\zeta\in \C: |\zeta-z|<t\}$, $z\in \C$, $t>0$, and
 $\D=D(0,1)$. Denote by $H(\D)$ the class of analytic functions in $\D$.
For  $f\in H(\D)$ we define  the \textit{maximum
modulus} $M(r,f)=\max\{ |f(z)|: |z|=r\}$, $0\le r<1$. The zero sequence of a function $f\in H(\D)$ will be denoted by $Z_f$.
In the sequel, the symbol $C$  with indices stands for  positive constants which depend on parameters indicated.
 We write $a(r)\sim b(r)$ if $\lim _{r\uparrow 1}  a(r)/b(r)=1
    $, $x^+=\max \{x, 0\}$.
Throughout this paper by $(1-w)^\alpha$, $w\in \D$, $\al\in \Bbb
R$, we mean the branch of the power function such that
$(1-w)^{\al}\Bigr|_{w=0}=1$. $BV[a,b]$ stands for the class of functions of bounded variation on $[a,b]$.

We are primary interested in zero distribution of analytic functions  from classes defined by  growth conditions in the unit disc.
The topic is closely related to the problem of factorization of such classes.

Usually, the orders of  growth of an analytic function $f$ in  $\D$
are defined as \[\rho_M[f]=\limsup_{r\uparrow 1} \frac{\log ^+
\log^+ M(r,f)}{-\log(1- r)}, \;\rho_T[f]=\limsup_{r\uparrow 1}
\frac{ \log^+ T(r,f)}{-\log(1- r)},\]
where $T(r,f)=\frac 1{2\pi} \int_0^{2\pi} \log^+ |f(\rr\theta)|\, d\th$.
It is well known  that
\begin{equation}\label{e:0}
\rho_T[f]\le \rho_M[f]\le \rho_T[f]+1,
\end{equation}
 and
all admissible values of the orders  are possible (\cite{ChUMZ}, \cite{Chy_Grish}, \cite{Lin_Bor}).

The paper is organized in the following way. In Sections 1 and 2  we give a survey of results on zero distribution and factorization in subclasses of $H(\D)$ defined by the growth conditions on $T(r,f)$ and $\log M(r,f)$, respectively.
In Section 3 we consider the concept of $\rho_\infty$-order, that goes back to works of C.\ N.\ Linden. This notion allows us to prove several new results for functions with $\rho_M<1$. Finally, in Section 4 we prove a criterion of uniform boundedness of the integral means of $\log|B(\rr\th)|$, where $B$ is a Blaschke product.

We do not consider  zero distribution and factorization either of functions of infinite order or meromorphic functions.
We address the reader who is interested in factorization of meromorphic functions to \cite{Khab06}.

\section{Classes defined by the growth of  $T(r,f)$}
\subsection{Zero distribution and growth of $T(r,f)$}
 To be more precise we start with canonical products.
Let $Z=(z_n)$ be a sequence of complex numbers in $\D$ without accumulation points in $\D$. We define the \textit{exponent of convergence} of $Z$  by $(\inf \emptyset=+\infty)$
\[\mu[Z]=\inf \Bigl\{ \mu\ge 0: \sum_{z_n\in Z} (1-|z_n|)^{\mu+1}<\infty \bigr\} . \]
It is well known \cite{Dj45,Dj48,Na,Ts} that the Djrbashian-Naftalevich-Tsuji \textit{canonical product}
\begin{equation}\label{e:can_prod}
P(z, Z,q)=\prod_{n=1}^\infty E\Bigl( \frac{1-|z_n|^2}{1-\bar z_n z},
q\Bigr),
\end{equation}
where $E(w,0)=1-w$,  \[E(w,q)=(1-w)\exp \{w+
w^2/2 +\dots+ w^q/q\},\quad q\in \N,\] is an analytic function with the zero sequence $Z$ provided that $\sum\limits_{z_n\in Z} (1-|z_n|)^{q+1}<\infty$.
We note that if $q=0$ then $P(z, Z, 0)=C B(z,Z)$, where $C=\prod _{z_n\in Z} {|z_n|}$, $$B(z,Z)=\prod_{z_n\in Z} \frac{\bar z_n(z_n-z)}{|z_n|(1-\bar z_n z)}$$ is the Blaschke product constructed by the sequence $Z$.

Let $n(r,Z_P)$ be the number of zeros in $\overline{D}(0,r)$,
\begin{equation}\label{e:n_ord}
    \rho_n[P]=\limsup_{r\uparrow1} \frac{\log ^+ n(r, P)}{-\log (1-r)},
\end{equation}
be  the order of the counting function of $Z_P$. Under the technical assumption that $0\not \in Z_f$  we also consider the \textit{Nevanlinna counting function}  $N(r,Z_f)=\int_0^r \frac {n(t,Z_f)}t \, dt$. Note that $N(r,Z_f)\le T(r,f)+O(1)$ due to the first fundamental theorem of R.Nevanlinna \cite{Ha}.

 In 1953 Naftalevich \cite{Na}, and in 1956 Tsuji \cite{Ts} proved that
\begin{equation}
    \rho_T[P]=(\rho_n[P]-1)^+  \label{e:rot}.
\end{equation}
 Moreover, $(\rho_n[P]-1)^+$ is equal to the convergence exponent $\mu(Z_P)$, and the order of $N(r,Z_P)$.

%

This result was improved by F. Shamoyan in \cite{Sha78,Sha83}.

Let $\omega\in C^1[0,1)$ be positive, monotone and such that
\begin{equation}\label{e:omega_cond}
    \int_0^1 \omega(t) \, dt<+\infty, \quad \sup_{r\in [r_0, 1)} \Bm \frac {(1-r)\omega'(r)}{\omega(r)}\Bm<q_\omega<+\infty,
\end{equation}
where $r_0\in (0,1)$. If $\omega$ is an increasing function we assume in addition that $0<q_\omega<1$.
The class $A_\omega^*$ consists of analytic functions $f$ in the unit disc satisfying \begin{equation} \label{e:A_omega_class}
    \int_0^1 \omega (r)  T(r,f) \, dr<+\infty.
\end{equation}
If $\omega(r)=(1-r)^{\alpha-1}$, $\alpha>0$, the $A_\omega^*$ coincides with
 Djrbashian's class $A_\alpha^*$ which consists of analytic functions $f$ in the unit disc satisfying
\begin{equation} \label{e:Astar_class}
    \int_0^1 (1-r)^{\alpha-1} T(r,f) \, dr<+\infty.
\end{equation}
Remark that \eqref{e:Astar_class} implies $\rho_T[f]\le \alpha$. On the other hand
$f\in A_{\alpha}^* $ provided that $\alpha> \rho_T[f]$.

\begin{atheorem}[{\cite[Theorem 1]{Sha83}}] Let $\omega$ be a monotone positive function satisfying \eqref{e:omega_cond}, $Z=(z_k)\subset \D$. In order that $Z$ be a sequence of zeros of a function $f\in A_\omega^*$, $f \not\equiv 0$ it is necessary  and sufficient that
\begin{equation} \label{e:sham_cond}
\sum_{z_k\in Z} (1-|z_k|)^2 \omega (|z_k|)<+\infty.
\end{equation}
Moreover, under condition \eqref{e:sham_cond}  Djrbashian's canonical product $P(z,Z,\alpha)$ (see  \eqref{e:djrbash_prod} below) is convergent in $\D$ and belongs to $A_\omega^*$ for $\alpha>q_\omega$.
\end{atheorem}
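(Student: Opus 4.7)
The plan is to prove necessity and sufficiency separately, both using $\omega$'s regularity \eqref{e:omega_cond} to reduce the problem to a one-dimensional integral inequality involving the counting function.

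For the necessity, I would take $f \in A_\omega^*$ with $f \not\equiv 0$ and zero sequence $Z = Z_f$, and invoke the first fundamental theorem of Nevanlinna to conclude $N(r, Z) \le T(r, f) + O(1)$. Rewriting the sum as a Stieltjes integral,
$$\sum_{z_k \in Z}(1-|z_k|)^2 \omega(|z_k|) = \int_0^1 (1-r)^2 \omega(r) \, dn(r, Z),$$
and integrating by parts gives a boundary term that vanishes (since $n(r)$ grows at most polynomially in $1/(1-r)$ while $(1-r)^2 \omega(r) \to 0$ under \eqref{e:omega_cond}). Bounding the derivative of $(1-r)^2 \omega(r)$ via $|(1-r)\omega'/\omega| \le q_\omega$ yields
$$\sum_k (1-|z_k|)^2 \omega(|z_k|) \le (2 + q_\omega) \int_0^1 n(r, Z) (1-r) \omega(r) \, dr.$$
Finally, combining the standard estimate $n(r)(1-r) \le C\,N((1+r)/2, Z)$ with the fact that $\omega((1+r)/2) \asymp \omega(r)$ (again from the differential inequality in \eqref{e:omega_cond}) reduces the right-hand side to $C \int_0^1 \omega(r) N(r, Z) \, dr$, which is finite because $N(r, Z) \le T(r, f) + O(1)$ and $\int_0^1 \omega(r) \, dr < \infty$ by hypothesis.

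For sufficiency I would assume $\sum (1-|z_k|)^2 \omega(|z_k|) < \infty$ and show that the Djrbashian canonical product $P(z, Z, \alpha)$ with $\alpha > q_\omega$ converges in $\D$, has zero sequence exactly $Z$, and belongs to $A_\omega^*$. Convergence and the correct zero set follow from Djrbashian's classical estimates on primary factors: the condition $\alpha > q_\omega$ together with $\omega(r) \ge c(1-r)^{q_\omega}$ (derived from \eqref{e:omega_cond}) implies $\sum (1-|z_k|)^{\alpha+1} < \infty$. For membership in $A_\omega^*$, the key tool is a pointwise upper bound for $\log^+|P(re^{i\theta}, Z, \alpha)|$ by a sum of Djrbashian kernels of the form $(1-|z_n|)^{\alpha+1}/|1 - \bar z_n r e^{i\theta}|^{\alpha+1}$. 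Averaging in $\theta$, interchanging summation and integration by Fubini, and using $\alpha > q_\omega$ to ensure convergence of the arising one-dimensional integrals in $\omega$ against the kernels should yield
$$\int_0^1 \omega(r) T(r, P) \, dr \le C \sum_k (1-|z_k|)^2 \omega(|z_k|) < \infty.$$

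The main obstacle is the sufficiency, specifically the delicate matching between the exponent $\alpha$, the regularity parameter $q_\omega$, and the pointwise estimates for Djrbashian primary factors. The strict inequality $\alpha > q_\omega$ is precisely what absorbs the singular growth of $\omega$ near $r = 1$ inside the Fubini integrals; any choice $\alpha \le q_\omega$ produces divergent bounds. Additional care is needed to justify the vanishing of the boundary term in the integration by parts and to derive the ``dilation'' equivalences such as $\omega((1+r)/2) \asymp \omega(r)$ directly from the differential inequality, via $|\log \omega(r) - \log \omega(r_0)| \le q_\omega |\log((1-r_0)/(1-r))|$.
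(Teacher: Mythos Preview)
This theorem is not proved in the present paper: it is quoted verbatim from Shamoyan \cite{Sha83} as part of the survey in Section~1, and no argument for it is given here. Consequently there is no ``paper's own proof'' to compare your proposal against; the only proof the paper actually supplies is that of Theorem~\ref{t:carl} in Section~4.

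That said, your outline follows the standard route for results of this type and is, in spirit, what one finds in Shamoyan's original paper: necessity via Jensen/Nevanlinna ($N(r,Z)\le T(r,f)+O(1)$) combined with an integration by parts against $(1-r)^2\omega(r)$, and sufficiency via Djrbashian's pointwise kernel bound $\log^+|P(z,Z,\alpha)|\le C\sum_k (1-|z_k|)^{\alpha+1}/|1-\bar z_k z|^{\alpha+1}$ followed by Fubini in $\int_0^1\omega(r)T(r,P)\,dr$. The regularity manoeuvres you describe (the dilation equivalence $\omega((1+r)/2)\asymp\omega(r)$ and the two-sided power bounds on $\omega$) are exactly the consequences of \eqref{e:omega_cond} that drive the argument.

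One technical point deserves more care than your sketch gives it. Your justification that $P(z,Z,\alpha)$ converges for every $\alpha>q_\omega$ relies on $\omega(r)\ge c(1-r)^{q_\omega}$, which yields only $\sum_k(1-|z_k|)^{q_\omega+2}<\infty$; this gives $\sum_k(1-|z_k|)^{\alpha+1}<\infty$ only when $\alpha\ge q_\omega+1$, not for all $\alpha>q_\omega$. To cover the range $q_\omega<\alpha<q_\omega+1$ you need the sharper two-sided control $c(1-r)^{-\beta_\omega}\le\omega(r)\le C(1-r)^{-q_\omega}$ (in the increasing case), or to argue convergence directly from the kernel estimate rather than from the crude exponent comparison.
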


\subsection{Factorization of classes defined by the growth of $T(r,f)$}
Canonical and parametric
 representations of functions analytic in $\D$ and of finite order of the growth were obtained  \cite{Dj45, Dj48, Dj}
in 1940's    by~M.~M.~Djrbashian using the Riemann-Liouville
fractional int\-eg\-ral.


\begin{atheorem} \label{t:a_alfa_star_faktor}
If $f\in A_\alpha^*$, $\alpha>0$ then $f$ admits a representation
\begin{equation} \label{e:dj_star_repres}
f(z)=C_\lambda z^\lambda P(z, Z_f,\alpha) \exp\Bigl\{\frac \alpha\pi  \int_{\D} \frac{\log |f(\zeta)| (1-|\zeta|^2) \, dm_2(\zeta)}{(1-\bar \zeta z)^{\alpha+2}} \Bigr\},
\end{equation}
where $C_\lambda$ is a complex constant, $\lambda\in \mathbb{Z}_+$, $m_2$ is the planar Lebesgue measure, $P(z, Z_f,\alpha)$ is a canonical product with the zeros $Z_f$, and of the form
\begin{equation} \label{e:djrbash_prod}
    P(z, Z_f, \alpha)=\prod_k \Bl 1- \frac z{z_k}\Br \exp\{ -U_\alpha (z,z_k)\},
\end{equation}
where
\begin{equation*}
    U_\alpha (z,z_k)=\frac {2\alpha}\pi \int_{\D}  \frac{\log |1 -\frac w\zeta| (1-|w|^2) \,dm_2(w)}{(1-\bar w z)^{\alpha+2}}, \quad z\in \D.
\end{equation*}
Moreover, $P(z, Z_f, \alpha)$ converges in $\D$ if and only if
\begin{equation*}
\sum_{z_k\in Z_f} (1-|z_k|)^{\al+1} <+\infty.
\end{equation*}
\end{atheorem}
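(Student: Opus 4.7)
The plan is to follow Djrbashian's original scheme based on the fractional integration operator and its associated reproducing kernel $(1-|\zeta|^2)/(1-\bar\zeta z)^{\alpha+2}$. I would split the argument into four stages: control of the zero sequence, convergence of the canonical product, a harmonic representation of the ``zero-free part'', and assembly of \eqref{e:dj_star_repres}.

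First I would verify that $\sum_{z_k\in Z_f}(1-|z_k|)^{\alpha+1}<+\infty$ so that $P(z,Z_f,\alpha)$ has a chance of converging. By the first fundamental theorem of Nevanlinna, $N(r,Z_f)\le T(r,f)+O(1)$. Combining this with the hypothesis \eqref{e:Astar_class} via Fubini and integration by parts gives $\int_0^1(1-r)^{\alpha}\,dN(r,Z_f)<+\infty$, which, after another integration by parts, is equivalent to the desired sum condition. Next, for the convergence and the zero structure of the product \eqref{e:djrbash_prod}, the main task is a uniform estimate on compact subsets of $\D\setminus Z_f$ for the kernel $U_\alpha(z,z_k)$; here one uses the explicit identity expressing $U_\alpha(z,z_k)$ as a standard logarithmic factor plus a remainder controlled by $(1-|z_k|)^{\alpha+1}$, and then applies the Weierstrass M-test under the sum condition just established.

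The core of the proof is the third stage: representation of the ``free'' factor. Put $g(z)=f(z)/(z^\lambda P(z,Z_f,\alpha))$, where $\lambda$ is the order of the zero at the origin; this $g$ is analytic and zero-free in $\D$, so $u(z):=\log|g(z)|$ is harmonic in $\D$. One then needs to show that $u$ inherits a growth bound compatible with $A_\alpha^*$, specifically $\int_0^1(1-r)^{\alpha-1}\int_0^{2\pi}|u(\rr\theta)|\,d\theta\,dr<+\infty$; this follows from the hypothesis on $T(r,f)$ together with known integral estimates for $\log|P(z,Z_f,\alpha)|$. Given this growth, Djrbashian's representation theorem for harmonic functions from the corresponding weighted class asserts that
\begin{equation*}
u(z)=\frac{\alpha}{\pi}\Re\int_{\D}\frac{\log|f(\zeta)|\,(1-|\zeta|^{2})\,dm_{2}(\zeta)}{(1-\bar\zeta z)^{\alpha+2}},
\end{equation*}
the kernel being the standard reproducing kernel of the weighted Bergman space $A^{1}_{\alpha}$. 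I would derive this either by the fractional Riemann--Liouville integration of the classical Schwarz formula applied to $g$ on circles $|z|=r$ and then passing to the limit $r\uparrow 1$ using the integral bound, or by directly invoking the reproducing property of the Djrbashian kernel.

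Finally I would take the analytic completion of the right-hand side; since $g$ is analytic and zero-free, the imaginary part is determined up to an additive constant, which is absorbed into a multiplicative constant $C_\lambda$. The equivalence ``$P$ converges in $\D$ iff $\sum(1-|z_k|)^{\alpha+1}<\infty$'' follows from the estimates of stage two together with the standard necessity argument (divergence of the sum forces $U_\alpha$ to diverge at some point). I expect the hardest step to be justifying the kernel representation in stage three with optimal growth hypothesis, since this is precisely the content of Djrbashian's fractional-integral machinery and requires delicate estimates of the kernel $(1-|\zeta|^2)/(1-\bar\zeta z)^{\alpha+2}$ and a careful boundary passage $r\uparrow 1$.
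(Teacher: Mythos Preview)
The paper is a survey and does not supply a proof of this theorem at all; it is quoted as a classical result of M.~M.~Djrbashian with references to \cite{Dj45,Dj48,Dj}. So there is no ``paper's own proof'' to compare against, and your outline is in fact a reasonable sketch of Djrbashian's original scheme (fractional integration of the Schwarz--Poisson formula, leading to the weighted reproducing kernel $(1-|\zeta|^2)/(1-\bar\zeta z)^{\alpha+2}$).

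One point in your stage three deserves correction. You set $g=f/(z^\lambda P)$ and $u=\log|g|$, and then write that the Djrbashian kernel representation gives
\[
u(z)=\frac{\alpha}{\pi}\,\Re\int_{\D}\frac{\log|f(\zeta)|\,(1-|\zeta|^{2})\,dm_{2}(\zeta)}{(1-\bar\zeta z)^{\alpha+2}}.
\]
This is not what the reproducing formula says: applied to the harmonic function $u$, it reproduces $u$ from its own values, i.e.\ the integrand should carry $\log|g(\zeta)|$, not $\log|f(\zeta)|$. The appearance of $\log|f|$ in \eqref{e:dj_star_repres} is not a direct output of the harmonic representation but the result of recombining the kernel integral of $\log|g|$ with the factors $\exp\{-U_\alpha(z,z_k)\}$ hidden in $P(z,Z_f,\alpha)$: by construction $U_\alpha(z,z_k)$ is (up to the factor $2$) the same kernel applied to $\log|1-\cdot/z_k|$, so summing over $k$ converts $\log|g|=\log|f|-\lambda\log|\cdot|-\sum_k\log|1-\cdot/z_k|$ back into $\log|f|$ under the integral. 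Your assembly step should make this cancellation explicit; as written, stage three conflates the two and would not close.
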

M.M. Djrbashian \cite{Dj48} noted that 
  $P(z,Z_f,\alpha)$ has the form \eqref{e:can_prod} if $\alpha\in \N$.

Besides the class $A_\alpha^*$, which can be defined by the condition  $$\sup_{0<r<1} \int _0^{2\pi} \biggl(\int_0^r (r-t)^{\al-1} \log^+
|f(te^{i\vfi})|\, dt\biggr)d\vfi<+\infty,$$ M.M.Djrbashian considered the class $ A_\al$ defined by  $$\sup_{0<r<
1}\int _0^{2\pi} \biggl(\int_0^r (r-t)^{\al-1} \log
|f(te^{i\vfi})|\, dt\biggr)^+d\vfi<+\infty.$$  Obviously, $A_\ga^*\subset  A_\al^*\subset A_\al\subset A_\be$, $\ga<\al<\be$. Moreover, the function $g_\alpha(z)=\exp\Bigl\{ \frac 1{(1-z)^{\alpha+1}}\Bigr\}$ belongs to $A_\alpha\setminus A_\alpha^*$.

\begin{atheorem} 
\label{t:dj1} \sl The class $A_\al$, $\alpha>-1$, coincides with the class
of functions represented in the form
\begin{gather}
f(z)=C_\la z^\la B_\al(z)\exp \biggl\{ \int\limits_0^{2\pi}
\frac{d\psi(\th)}{(1-e^{-i\th}z)^{\al+1}} \biggr\}
\equiv C_\la z^\la B_\al(z) \exp\{g_\al(z)\}, \label{e:bar_rep1}
\end{gather}
where $\psi\in BV[0,2\pi]$,  $\sum\limits_{z_k \in Z_f} (1-|z_k|)^{\al+1} <+\infty$; $
B_\al(z)=\prod_k \Bigl(1-\frac z{z_k}  \Bigr)e^{ -W_\al (z,z_k
)}$ is  Djrbashian's product
$$W_\al(z,\zeta)=\int_{|\zeta|}^1 \frac{(1-x)^\alpha}x dx +\sum_k\frac{\Ga(\al+k+1)}{\Ga(\al+1)\Ga(1+k)}
\times $$ $$\times\biggl( (\bar \zeta z)^k\int_{|\zeta|}^1
\frac{(1-x)^\alpha}{x^{k+1}} dx- \Bigl(\frac z\zeta\Bigr)^k
\int_0^{|\zeta|} (1-x)^\al x^{k-1} \, dx\biggr).$$
\end{atheorem}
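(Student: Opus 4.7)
The plan is to prove the two inclusions separately, with the key computational device being Djrbashian's identity
\begin{equation*}
\int_0^r (r-t)^{\al-1}(1-wte^{i\vfi})^{-(\al+1)}\,dt = \frac{r^\al}{\al(1-wre^{i\vfi})},\qquad w\in\overline{\D},
\end{equation*}
a special value of the Euler integral representation of ${}_2F_1(\al+1,1;\al+1;\cdot)$. This identity converts the $\al$-fractional integral defining $A_\al$ into an ordinary Cauchy-type integral, and together with $\Re(1-re^{i\eta})^{-1}=\tfrac12(P_r(\eta)+1)$ it reduces everything to classical Poisson--Stieltjes theory.

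For sufficiency, assume $f$ has the form \eqref{e:bar_rep1}. The factor $z^\la$ is trivial, and $\log|B_\al|\in A_\al$ follows by direct radial estimation of Djrbashian's $E$-type exponentials under the convergence hypothesis $\sum(1-|z_k|)^{\al+1}<+\infty$. For the exponential factor, swap integrals via Fubini and apply the displayed identity to obtain
\begin{equation*}
\int_0^r (r-t)^{\al-1}\Re g_\al(te^{i\vfi})\,dt = \frac{r^\al}{2\al}\Bl P_r[\psi](\vfi) + \psi(2\pi)-\psi(0)\Br,
\end{equation*}
whose positive part has $L^1(d\vfi)$-norm controlled by the total variation of $\psi$, uniformly in $r$.

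For necessity, let $f\in A_\al$. Applying Fubini and Jensen's formula to the defining integral of $A_\al$ yields $\int_0^r(r-s)^\al n(s,Z_f)s^{-1}\,ds\le C$, whence $\sum(1-|z_k|)^{\al+1}<+\infty$ and the product $B_\al$ with zero sequence $Z_f$ converges in $\D$. Setting $g(z)=f(z)z^{-\la}/B_\al(z)$, where $\la$ is the order of the zero at $0$, lower estimates on $B_\al$ force $g\in A_\al$, and $g$ is zero-free, so $u=\log|g|$ is harmonic in $\D$ with the same integral hypothesis. The core analytic step is to represent $u$ as $\Re\int_0^{2\pi}(1-e^{-i\th}z)^{-(\al+1)}\,d\psi(\th)$ with $\psi\in BV[0,2\pi]$: the sufficiency computation, run backwards, shows that the $\al$-fractional radial integrals of $u$ are Poisson integrals of a family of measures $d\mu_r$ whose total variations remain uniformly bounded in $r$ by the $A_\al$ hypothesis; weak-$*$ compactness supplies a boundary measure $d\psi$, and inversion of the fractional integration (most cleanly by matching Fourier coefficients in $\vfi$) yields the formula. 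Passing to a harmonic conjugate recovers $\log g$ and hence the claimed factorization of $f$ with an appropriate constant $C_\la$.

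The main obstacle is this last step of the necessity part: extracting a genuine $BV$ boundary measure from the fractional-integral data of $u$ and consistently inverting the fractional integration. One must verify that the weak-$*$ limit of the measures $d\mu_r$ is of bounded variation rather than merely a distribution of higher order, and that the inversion reproduces $u$ pointwise in $\D$ and not only after further averaging. Delicate estimates on fractional derivatives of Poisson--Stieltjes integrals as $r\uparrow 1$ are required to close this gap.
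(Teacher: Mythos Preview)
The paper does not prove this theorem at all. Theorem~\ref{t:dj1} is stated in the survey portion of the paper as a classical result of M.~M.~Djrbashian, with the attribution ``Canonical and parametric representations of functions analytic in $\D$ and of finite order of the growth were obtained \cite{Dj45, Dj48, Dj} in 1940's by M.~M.~Djrbashian using the Riemann--Liouville fractional integral,'' and no argument is supplied in the paper itself. Consequently there is no ``paper's own proof'' against which to compare your proposal.

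That said, your outline is broadly faithful to Djrbashian's original strategy: the identity you display for the fractional integral of $(1-wte^{i\vfi})^{-(\al+1)}$ is precisely the computational engine of his method, and the decomposition into the canonical product $B_\al$ plus a zero-free exponential represented via a boundary measure is exactly the structure of his argument. Your own final paragraph correctly identifies the genuine analytic content --- obtaining a bounded-variation limit measure and inverting the fractional integration --- and candidly leaves it open. So what you have is a reasonable plan that matches the literature approach, but it is not a proof; and since the paper under review offers no proof either, no further comparison is possible.
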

More general results for arbitrary growth are obtained in \cite{Dj73}.
\section{Classes defined by the growth $\log M(r,f)$}
\subsection{Zero distribution and growth of $\log M(r,f)$}
B.\ Khabibullin \cite{Khab06} considered the following problem.
\begin{problem}
Given a sequence $Z$ in $\D$ without accumulation points in $\D$, find the lowest possible growth of $\log M(r,f)$ in the class of analytic functions $f\not\equiv 0$ vanishing on $Z$.
\end{problem}
An increasing continuous function $d\colon [a,1)\to [0,1)$, where $a\in [0,1)$ is called \cite{Khab06} a  \textit{shift function} if $t<d(t)<1$ for $t\in [a,1)$.

\begin{atheorem}[{\cite[Theorem 1]{Khab06}}] Let $Z$ be a sequence in $\D$, $d$ be convex or concave shift function.
Then there exists a function $f\in H(\D)$, $f\not\equiv0$ such that $Z_f\supset Z$ and $\log M(r,f)\le \frac{C}{d(r)-r} N(r,Z)$ for some positive constant $C$.
\end{atheorem}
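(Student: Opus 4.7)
The plan is to construct $f$ as an infinite product of Weierstrass--Djrbashian-type factors whose orders adapt, layer by layer, to the shift $d$. First, iterate $d$ starting from $R_0=a$, setting $R_{k+1}=d(R_k)$. Since $d(t)>t$ on $[a,1)$ and $d$ is continuous, any finite limit of $R_k$ would be a fixed point of $d$, which is forbidden, so $R_k\uparrow 1$. This yields the layer decomposition $Z=\bigsqcup_k A_k$ with $A_k=Z\cap\{R_k\le|z|<R_{k+1}\}$ of cardinality $N_k$.

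For each $z_n\in A_k$ I take the factor
\begin{equation*}
E_n(z)=\Bl 1-\frac{z}{z_n}\Br\exp\Bl\sum_{j=1}^{q_k}\frac{1}{j}\Bl\frac{z}{z_n}\Br^j\Br,
\end{equation*}
with the order $q_k\asymp (R_{k+1}-R_k)^{-1}$, and put $f(z)=\prod_n E_n(z)$. Local uniform convergence in $\D$ follows from the standard Weierstrass bound $|\log E(w,q)|\le |w|^{q+1}/(1-|w|)$ for $|w|<1$: any compact $\{|z|\le\rho\}\subset\D$ lies inside $\{|z|<R_{k_0}\}$ for some $k_0$, and for $k>k_0$ the resulting terms are summable thanks to the rapid growth of $q_k$.

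The crux is bounding $\log|f(z)|$ from above on $|z|=r\in[R_{k_0},R_{k_0+1})$. I split the sum over layers and apply Weierstrass estimates in each: $|w|^{q+1}/(1-|w|)$ for $|w|<1$ (zeros outside $|z_n|>r$) and a complementary bound $\lesssim q|w|^q$ for $|w|>1$ (zeros inside $|z_n|<r$). The dominant ``home layer'' contribution $\sum_{n\in A_{k_0}}\log|E_n(z)|$ is bounded by $\asymp N_{k_0}/(R_{k_0+1}-R_{k_0})$ after an angular argument: only $O(1/q_{k_0})$ of the zeros in $A_{k_0}$ realize the peak of $\log|E_n(z)|$ at a given $z$, so the sum is $O(N_{k_0})$ rather than the crude $O(N_{k_0}q_{k_0})$. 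Off-layer contributions decay geometrically in $|k-k_0|$. Finally, $N(r,Z)\ge N_{k_0}\log(R_{k_0+1}/R_{k_0})\asymp N_{k_0}(R_{k_0+1}-R_{k_0})$ together with the equivalence $d(r)-r\asymp R_{k_0+1}-R_{k_0}$ on the layer yields the announced bound.

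The convex/concave hypothesis on $d$ enters in two places: it forces quasi-monotonicity of the increments $R_{k+1}-R_k$, which is needed for the discrete-to-continuous comparison $d(r)-r\asymp R_{k+1}-R_k$ uniformly on $[R_k,R_{k+1}]$, and it rules out pathological jumps between consecutive layers in the summation-by-parts step. The main technical obstacle I foresee is precisely the angular bookkeeping in the layerwise estimate $\sum_{n\in A_k}\log|E_n(z)|=O(N_k)$; the weaker bound $O(N_k q_k)$ is insufficient, and obtaining the sharp sum requires exploiting the polynomial-approximation structure of the Weierstrass factor together with an averaging over the $N_k$ zeros of the layer.
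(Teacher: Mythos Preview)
The paper does not contain a proof of this statement: it is quoted verbatim from Khabibullin~\cite{Khab06} as part of the survey in Section~2.1, with the attribution ``\cite[Theorem~1]{Khab06}'', and no argument is supplied. There is therefore nothing in the paper to compare your proposal against.

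That said, your sketch has a genuine gap at its self-identified crux. You claim that for $|z|=r\in[R_{k_0},R_{k_0+1})$ the home-layer sum satisfies $\sum_{n\in A_{k_0}}\log|E_n(z)|=O(N_{k_0})$ because ``only $O(1/q_{k_0})$ of the zeros in $A_{k_0}$ realize the peak of $\log|E_n(z)|$ at a given $z$''. But the sequence $Z$ is \emph{prescribed}: nothing prevents every $z_n$ in the layer $A_{k_0}$ from sitting at the same argument (or even at the same point, with multiplicity). In that case, for $z$ on the ray through the common argument, each factor contributes the full peak simultaneously and no angular cancellation occurs. Your own text is inconsistent on this point: you first say the home-layer contribution is ``$\asymp N_{k_0}/(R_{k_0+1}-R_{k_0})$'' and then, two lines later, ``$O(N_{k_0})$'' --- but with $q_{k_0}\asymp(R_{k_0+1}-R_{k_0})^{-1}$ these are $N_{k_0}q_{k_0}$ and $N_{k_0}$, which differ by exactly the factor you are trying to save.

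The bound $\log M(r,f)\le C\,N(r,Z)/(d(r)-r)$ actually only requires the crude estimate $O(N_{k_0}q_{k_0})$ on the home layer, since $N(r,Z)\gtrsim N_{k_0}(R_{k_0+1}-R_{k_0})$ already matches $N_{k_0}q_{k_0}\cdot(d(r)-r)$. So the theorem may still be salvageable along your lines, but you should drop the angular claim, accept the $O(N_{k_0}q_{k_0})$ bound for the home layer, and then make sure the \emph{off-layer} tails genuinely sum geometrically --- that is where the real work lies, and your plan currently asserts it without justification. For the inner layers ($k<k_0$) the classical Weierstrass bound $\log|E(w,q)|\lesssim q|w|^q$ for $|w|>1$ you invoke is false in that form; the correct estimate involves $|w|^{q+1}$ and blows up, so the inner-layer control needs a different mechanism (e.g.\ disc-adapted factors rather than plane Weierstrass factors).
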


Another approach was used by C.N.Linden.  In 1964  (\cite{L_rep}) he established a connection between $\rho_M[P]$ and the zero distribution of $P$, where
$P$ is of the form \eqref{e:can_prod}. To clarify this connection we need some definitions.

Let \[\square(\rr\vfi)=\Bigl\{\rho e^{i\theta}:  r\le \rho \le \frac {1+r}2 , |\theta -\vfi|\le  \frac{\pi(1-r)}2\Bigr\}, \]
and $\nu(\rr\vfi)$ be the number of zeros of $P$ in $\square(\rr\vfi)$.
We define
\begin{equation}\label{e:nu_ord}
\nu_1(r, P)=\max_\varphi \nu (\rr\varphi), \quad
    \nu[P]=\limsup_{r\uparrow1} \frac{\log ^+ \nu_1(r, P)}{-\log (1-r)}, 
\end{equation}

\begin{atheorem}[{\cite[Theorem V]{L_rep}}] \label{t:lin} With the notation above we have
\begin{align}
    \label{e:rom}  \rho_M[P]& \begin{cases} =\nu[P], & \rho_M[P]\ge 1, \\
\le \nu[P]\le 1, & \rho_M[P]< 1. \end{cases}
\end{align}
\end{atheorem}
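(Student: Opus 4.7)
The plan is to derive the theorem from two inequalities: the unconditional upper bound $\rho_M[P]\le\nu[P]$, and its partial converse $\nu[P]\le\rho_M[P]$ valid whenever $\rho_M[P]\ge 1$. Since $\nu[P]>1$ would, by the converse, force $\rho_M[P]\ge\nu[P]>1$, the clause $\nu[P]\le 1$ in the subunit regime will then follow by contraposition. The starting point is the pointwise decomposition $\log|P(z)|=\sum_n \log|E(\eta_n(z),q)|$ with $\eta_n(z)=(1-|z_n|^2)/(1-\bar z_n z)$, together with the standard two-regime estimates $|\log|E(w,q)||\le C|w|^{q+1}$ for $|w|\le 1/2$ and $|\log|E(w,q)||\le C+|\log|1-w||$ otherwise. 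The key geometric observation is that $|\eta_n(z)|\asymp 1$ precisely when $z_n$ lies in the Carleson-type square $\square$ attached to $z$, and decays like a power of the pseudohyperbolic distance from $z$ to $z_n$ otherwise.

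For the upper bound, I would fix $z$ near $\partial\D$ and distribute the zeros among a dyadic family of such squares at radial levels $r_k$ with $1-r_k=2^k(1-|z|)$ and matching angular widths. The zero count per square is bounded by $\nu_1(r_k,P)\le(1-r_k)^{-\nu[P]-\ve}$, and each zero contributes $O(2^{-(q+1)k})$ to $\log|P(z)|$ at distant scales and $O(1)$ at neighbouring ones; summing the resulting geometric series in $k$, and over the finitely many angular shifts at each scale, yields $\log|P(z)|\le C(1-|z|)^{-\nu[P]-\ve}$ and hence $\rho_M[P]\le\nu[P]$.

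For the lower bound in the regime $\rho_M[P]\ge 1$, suppose $\nu[P]>\lambda$ and pick $r_n\uparrow 1$ and $\vfi_n$ for which $\square(r_n e^{i\vfi_n})$ contains $N_n>(1-r_n)^{-\lambda}$ zeros. On a Euclidean disc $D(c_n,s)\subset\D$ of radius $s\asymp 1-r_n$ centred inside the cluster, Jensen's formula gives
\[
\max_{|w|=s}\log|P(c_n+w)|\ge\log|P(c_n)|+\tfrac{1}{2}N_n\log 2,
\]
since at least half of the cluster zeros lie within distance $s/2$ of a suitably chosen centre. A Cartan-type selection on a slightly larger concentric disc lets me pick $c_n$ with $\log|P(c_n)|\ge-\tfrac{1}{4}N_n$, producing a point at Euclidean distance $\asymp 1-r_n$ from $\partial\D$ at which $\log|P|\gtrsim N_n\gtrsim(1-r_n)^{-\lambda}$. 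This gives $\rho_M[P]\ge\lambda$, and letting $\lambda\uparrow\nu[P]$ completes the argument.

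The delicate point is the localization in this last step. A direct application of Jensen on the full disc $\{|z|<R\}$ with $R\asymp 1-\tfrac{1}{2}(1-r_n)$ costs a full factor $1-r_n$ per cluster zero and yields only the weaker $\rho_M[P]\ge\lambda-1$; to extract the sharp exponent one must restrict Jensen to a Euclidean disc of the same size $\asymp 1-r_n$ as the cluster, where every cluster zero contributes $\Theta(1)$, and supplement it with a Cartan-type lower bound for $\log|P(c_n)|$. This is the technical heart of the argument, and its breakdown for $\rho_M[P]<1$ is precisely what forces the theorem to assert only the weaker $\nu[P]\le 1$ in that regime.
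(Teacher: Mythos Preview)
This theorem is not proved in the present paper: it is Linden's 1964 result, quoted without argument as part of the survey in Section~2, and the only proof the paper supplies is that of Theorem~\ref{t:carl} in Section~4. There is consequently no in-paper proof against which to compare your proposal.

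Assessing your sketch on its own terms: the upper bound $\rho_M[P]\le\nu[P]$ via a dyadic Carleson-box decomposition and the standard primary-factor estimates is the right idea and would go through. The lower bound, however, has a genuine gap at the Cartan step. You assert that one can choose a centre $c_n$ inside the cluster with $\log|P(c_n)|\ge-\tfrac14 N_n$, but you give no mechanism for this, and the claim is false in the generality stated. For a Blaschke product $B$ with $N_n$ zeros in $\square(r_ne^{i\vfi_n})$ one has $\log|B|\le 0$ everywhere, so Jensen on your small disc forces $\log|B(c)|\le -N_n\log 2$ for \emph{every} centre whose disc of radius $s/2$ captures the cluster---the opposite of what you need. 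This is precisely the regime $\rho_M<1$, $\nu$ near~$1$ in which the theorem declines to assert equality. Your lower-bound paragraph never actually invokes the hypothesis $\rho_M[P]\ge 1$ (or, what is really required, $\lambda>1$); as written it would yield $\rho_M[P]\ge\nu[P]$ unconditionally, which is false. Relatedly, the contraposition you announce at the outset does not follow from the two inequalities as you phrase them: a converse conditioned on $\rho_M\ge 1$ does not contrapose to a statement conditioned on $\nu>1$. A correct argument must isolate exactly where a sufficiently dense cluster makes the negative part of $\log|P|$ controllable relative to the Jensen gain; your sketch acknowledges a breakdown but does not locate it.
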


This result was improved and generalized by F.\ Shamoyan in \cite{Sha78,Sha83}.
We follow the notation of \cite{Sha78}.
Let $\vfi$ be nonnegative increasing function on $(0,+\infty)$. Set
\begin{equation}\label{e:x_phi_class}
    X_\vfi^\infty=\Bigl\{f\in H(\D): \log|f(z)|\le C(f) \vfi\Bl \frac 1{1-|z|}\Br \Bigr\}.
\end{equation}
Assume  that for
$$ \beta_\vfi=\liminf_{x\to+\infty}\frac{x \vfi'(x)}{\vfi(x)}, \quad
 \alpha_\vfi=\limsup_{x\to+\infty}\frac{x \vfi'(x)}{\vfi(x)}$$
 we have $\beta_\vfi \le \alpha_\vfi<+\infty$.

\begin{atheorem}[{\cite[Theorem 1]{Sha78}}] \label{t:sha_78}
Suppose that $\vfi$ satisfies the above conditions.
\begin{itemize}
  \item [i)]  Let $\beta_\vfi>1$. If $f\in X_\vfi^\infty$, $f(0)=1$,  then $\nu(r,Z_f)\le C \vfi\Bigl( \frac 1{1-r}      \Bigr)$ for some positive constant $C$;
  \item [ii)] Let $\beta_\vfi>0$. If $Z$ be an arbitrary sequence in $\D$ such that $\nu(r,Z)\le C \vfi\Bigl( \frac 1{1-r}      \Bigr)$ for some positive constant $C$,   then
      $P(z, Z, \alpha) \in X_\vfi^\infty$ for every $\alpha>\alpha_\vfi+1$.
      \end{itemize}
\end{atheorem}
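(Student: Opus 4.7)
My plan is to handle the two implications separately: for (i), to use a Jensen-type zero-counting argument in a Euclidean disk of radius $\asymp 1-r$ placed near $\square(\rr\psi)$, together with a lower bound for $\log|f|$ at a suitable center; for (ii), to estimate the Weierstrass--Djrbashian factors pointwise using a dyadic/Whitney decomposition of $\D$ and then invoke the density hypothesis on $Z$. In both parts, $\alpha_\vfi<+\infty$ will be used in the form $\vfi(Kx)\le C_K\vfi(x)$ for large $x$, obtained by integrating $x\vfi'(x)/\vfi(x)\le \alpha_\vfi+o(1)$.

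For part (i), fix $\psi\in[0,2\pi)$ and $r$ close to $1$; write $R=1-r$, set $\zeta_0=\frac{3+r}{4}e^{i\psi}$, and pick a constant $\rho_0\in(0,1)$ so that $D(\zeta_0,\rho_0 R)\supset \square(\rr\psi)$ while $D(\zeta_0,2\rho_0 R)\subset\{|z|\le 1-\sigma R\}$ for some $\sigma>0$. On $\partial D(\zeta_0,2\rho_0 R)$ the growth assumption and $\alpha_\vfi<+\infty$ give $u:=\log|f|\le C_1\vfi(1/R)$, and Jensen's counting inequality for subharmonic $u$ yields
\[
\nu(\rr\psi)\,\log 2 \le \frac{1}{2\pi}\int_0^{2\pi}u\bigl(\zeta_0+2\rho_0 Re^{i\theta}\bigr)\,d\theta - u(\tilde\zeta_0) \le C_1\vfi(1/R) - u(\tilde\zeta_0)
\]
for any admissible center $\tilde\zeta_0$ inside the inner disk. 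It remains to find $\tilde\zeta_0\in\square(\rr\psi)$ at which $u(\tilde\zeta_0)\ge -C_2\vfi(1/R)$. To produce such a point I would combine two ingredients: the Nevanlinna bound $T(\rho,f)\le C\vfi(1/(1-\rho))$, which controls the $L^1$-norm of $u$ on circles close to $\partial\D$, with Cartan's lemma in its analytic-function form (cf.\ Levin) applied to $f/\max_{D_2}|f|$; this removes from $\square(\rr\psi)$ a subset of negligible linear measure outside which $u\ge -C_3\vfi(1/R)$. The hypothesis $\beta_\vfi>1$, equivalently $\vfi(x)/x\to+\infty$, is invoked here to absorb the additive logarithmic correction produced by Cartan's estimate. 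Maximizing over $\psi$ then gives the desired bound on $\nu_1(r,Z_f)$.

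For part (ii), I would estimate $\log|P(z,Z,\alpha)|$ pointwise. Fix $z\in\D$, set $R=1-|z|$, and decompose $\D$ into Whitney-type squares $\square_{k,j}$ of pseudo-hyperbolic scale $2^{-k}$ and angular index $j$. For a zero $z_n\in\square_{k,j}$, the argument $w_n=(1-|z_n|^2)/(1-\bar z_n z)$ of the Weierstrass factor satisfies $|w_n|\asymp 2^{-k}/\max(2^{-k},d_{k,j})$, where $d_{k,j}$ is the pseudo-hyperbolic distance from $\square_{k,j}$ to $z$. Combining the two-case estimate
\[
\bigl|\log|E(w,\alpha)|\bigr|\le C\min\bigl\{|w|^{\alpha+1},\ 1+\log\tfrac{1}{|1-w|}\bigr\},
\]
with the density hypothesis $\#(Z\cap\square_{k,j})\le C\vfi(2^k)$ bounds the contribution from each square. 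Summing first over $j$ (for fixed $k$) and then over $k$ produces a geometric series; the strict inequality $\alpha>\alpha_\vfi+1$ ensures convergence of both the distant part (coming from the power bound $|w|^{\alpha+1}$) and the nearby part (coming from the logarithmic bound), with total $C\vfi(1/R)$.

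The principal obstacle is the lower-bound step in part (i): a single fixed $\zeta_0$ cannot be used because $f$ may vanish arbitrarily close to it, so the perturbation to $\tilde\zeta_0$ via a Cartan-type estimate is essential, and controlling the exceptional set uniformly in $r$ and $\psi$ requires combining that estimate with the Nevanlinna $L^1$-bound on circles. A secondary point is that Cartan's classical lemma is stated for polynomials; one must verify the analytic-function analogue delivers the right dependence on $\vfi(1/R)$. In part (ii) the main care needed is in organizing the dyadic summation, which is routine once the per-square estimate is in hand.
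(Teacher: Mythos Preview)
The paper does not contain a proof of this statement: Theorem~\ref{t:sha_78} is quoted from Shamoyan's paper \cite{Sha78} as part of the survey in Section~2, and no argument for it is given here. There is therefore no ``paper's own proof'' against which to compare your proposal.

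That said, a brief comment on your sketch. The overall architecture is standard and reasonable for both parts. For part~(ii) the Whitney/dyadic decomposition together with the two-regime bound on $|\log|E(w,\alpha)||$ is indeed the usual route and should go through once the bookkeeping is done; the condition $\alpha>\alpha_\vfi+1$ is precisely what makes the far-away geometric series converge. For part~(i) your identification of the lower-bound step as the crux is correct, but the mechanism you propose is heavier than necessary. A cleaner way to manufacture the required point $\tilde\zeta_0$ with $\log|f(\tilde\zeta_0)|\ge -C\vfi(1/(1-r))$ is to exploit the normalization $f(0)=1$ via the global Poisson--Jensen formula in a disk $|z|<R$ with $R=1-\sigma(1-r)$: writing $\log|f(\zeta_0)|$ as the Poisson integral of $\log|f|$ over $|z|=R$ minus a sum of Green potentials at the zeros, the Poisson integral is controlled by the upper bound on $\log|f|$ and by $T(R,f)\le C\vfi(1/(1-R))$ (which follows from $f(0)=1$), while the Green-potential sum is controlled by $N(R,Z_f)$ together with a localisation that uses $\beta_\vfi>1$ to compare $\vfi(1/(1-r))$ against $(1-r)^{-1}$-weighted integrals. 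This avoids Cartan's lemma altogether and makes the role of $\beta_\vfi>1$ transparent. Your Cartan-based argument can be made to work, but you would have to be careful: Cartan's estimate in its analytic form gives an exceptional set whose size is governed by the total number of zeros in the relevant disk, so you are in danger of a circular argument unless you first establish a crude global bound on $n(r,Z_f)$ from Jensen's formula and the normalization $f(0)=1$.
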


As we can see, this theorem gives a description of zeros for functions $f\in H(\D)$ of finite order $\rho_M[f]>1$. A counterpart of this result for functions of infinite order is obtained in {\cite[Theorem 2]{Sha83}}.

\subsection{Factorization of classes defined by the growth\\ of $\log M(r,f)$}
In \cite[Theorem I]{L_rep} Linden proved the following result.

\begin{atheorem} \label{t:factorization}
Let $f$ be  analytic in $\D$ and of order $\rho_M[f]\ge 1$. Then
\[ f(z)=z^p P(z) g(z), \]
where $P$ is a canonical product displaying the zeros of $f$, $p$ is nonnegative integer, $g$ is non-zero and both $P$ and $g$ are analytic and of $\rho_M $-order at most $\rho_M[f]$.
\end{atheorem}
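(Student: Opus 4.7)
The plan is to construct the factorization in three stages: choose a suitable integer exponent $q$, form the Djrbashian-Naftalevich-Tsuji canonical product $P(z,Z_f,q)$ of \eqref{e:can_prod}, and then control the $\rho_M$-orders of $P$ and of the non-vanishing quotient $g=f/(z^pP)$ separately. The first two steps are rather clean; the difficulty is concentrated in bounding $\rho_M[g]$.

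\emph{Step 1 (choice of $q$).} From Jensen's inequality and the trivial $T(r,f)\le \log^+M(r,f)$ one gets $N(r,Z_f)\le \log^+M(r,f)+O(1)$, hence $\mu[Z_f]\le\rho_T[f]\le\rho_M[f]$. Since $\rho_M[f]\ge 1$, the integer $q:=\lceil\rho_M[f]\rceil$ satisfies $q+1>\mu[Z_f]$, so $\sum_n(1-|z_n|)^{q+1}<+\infty$ and $P(z):=P(z,Z_f,q)$ converges in $\D$ to an analytic function whose zero sequence is exactly $Z_f$.

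\emph{Step 2 ($\rho_M[P]\le\rho_M[f]$).} If $\rho_M[P]<1$ this is automatic from $\rho_M[f]\ge 1$. Otherwise, Theorem~\ref{t:lin} reduces the task to showing $\nu[P]\le\rho_M[f]$. For each $\rr\vfi$, cover $\square(\rr\vfi)$ by a Euclidean disc $D(\rr\vfi,C_0(1-r))$, contained in a slightly larger concentric disc still lying in $\{|z|<1-c(1-r)\}$, and apply the Jensen counting inequality for $f$ on that disc: $\nu(\rr\vfi)\le C_1\log M(1-c(1-r),f)$. Taking the maximum in $\vfi$ and the limsup as $r\uparrow 1$ yields $\nu[P]\le\rho_M[f]$, as desired.

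\emph{Step 3 ($g:=f/(z^pP)$ and its growth).} Let $p$ be the order of vanishing of $f$ at $0$. Because $Z_f=Z_P$ with multiplicities, $g\in H(\D)$ and $g(z)\ne 0$ in $\D$. To control $\log M(r,g)$ I would exploit the Poisson-Jensen formula on $\{|z|<\rho\}$ with $\rho=(1+r)/2$:
\[
\log|f(z)|=\sum_{|z_n|<\rho}\log\Bigl|\frac{\rho(z-z_n)}{\rho^2-\bar z_n z}\Bigr|+\frac{1}{2\pi}\int_0^{2\pi}\frac{\rho^2-|z|^2}{|\rho e^{i\theta}-z|^2}\log|f(\rho e^{i\theta})|\,d\theta.
\]
Subtracting $p\log|z|+\log|P(z)|$, one writes $\log|g(z)|$ as the Poisson integral above plus the difference between the finite $\rho$-Blaschke factors and the full canonical factors $E(u_n(z),q)$, where $u_n(z)=(1-|z_n|^2)/(1-\bar z_n z)$. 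Using the standard estimate $\log|E(u,q)|\le C_q\min(|u|^q,|u|^{q+1})$ together with $\sum(1-|z_n|)^{q+1}<+\infty$ (and a separate treatment of the tail $|z_n|\ge\rho$), each piece is estimated by $C(1-r)^{-\rho_M[f]-\ve}$. The Poisson integral is bounded in absolute value on $|z|=r$ by $\tfrac{\rho+r}{\rho-r}\log^+M(\rho,f)$; the $(1-r)^{-1}$ loss from the kernel is compensated by the fact that the choice $q=\lceil\rho_M[f]\rceil$ makes the polynomial correction factors absorb it exactly, giving $\log M(r,g)=O((1-r)^{-\rho_M[f]-\ve})$ and hence $\rho_M[g]\le\rho_M[f]$.

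The main obstacle is precisely this balancing in Step~3: one must check that the Poisson-kernel inflation of $\log^+M(\rho,f)$ by $(1-r)^{-1}$ is genuinely absorbed by the exponential polynomial corrections of the canonical factors, and this cancellation hinges on the integer $q=\lceil\rho_M[f]\rceil$ lying between $\mu[Z_f]$ and $\rho_M[f]+1$; this is exactly why the hypothesis $\rho_M[f]\ge 1$ is indispensable and why the parallel statement for $\rho_M[f]<1$ (treated separately in Section~3) is strictly weaker.
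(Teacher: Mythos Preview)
The paper does not give its own proof of this statement; it is quoted as Linden's Theorem~I from \cite{L_rep} and used as a black box. So there is nothing in the present paper to compare your argument against. That said, let me comment on the argument itself.

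Steps~1 and~2 are essentially correct and standard, with one small caveat: if $\rho_M[f]$ happens to be an integer equal to $\mu[Z_f]$, the choice $q=\lceil\rho_M[f]\rceil$ may fail to guarantee $\sum(1-|z_n|)^{q+1}<\infty$ at the boundary case; take $q=[\rho_M[f]]+1$ instead (as Linden does). The local Jensen argument for $\nu[P]\le\rho_M[f]$ is fine.

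Step~3 is where there is a genuine gap. Your Poisson--Jensen decomposition is the right starting point, but the sentence ``the $(1-r)^{-1}$ loss from the kernel is compensated by the fact that the choice $q=\lceil\rho_M[f]\rceil$ makes the polynomial correction factors absorb it exactly'' is not an argument. The Poisson integral term and the canonical-product correction terms are \emph{separate} pieces of $\log|g|$; there is no mechanism in what you wrote by which the polynomial tails of $E(u_n,q)$ cancel the Poisson kernel's inflation of $\log^+M(\rho,f)$. What actually happens in Linden's proof is more delicate: one does not bound the Poisson integral crudely by $\tfrac{\rho+r}{\rho-r}\log^+M(\rho,f)$ at all. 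Instead one works with integral means (essentially $m_1(\rho,f)$ or $T(\rho,f)$) and exploits the harmonicity of $\log|g|$ together with a Borel--Carath\'eodory type argument to pass from one-sided control to two-sided control without losing a full power of $(1-r)^{-1}$. Alternatively, one can argue via the explicit representation of the zero-free factor as in \eqref{e:mamo_repres}. Either way, the cancellation you allude to has to be made explicit, and as written your Step~3 would only yield $\rho_M[g]\le\rho_M[f]+1$.
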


Further, in Theorem IV \cite{L_rep}, Linden showed that if $\rho_M[f]<1$ one has $$\max\{ \rho_M[P], \rho_M[g]\}\le \max \{ \rho_M[f], \nu[f]\}.$$


For $\vfi(x)=x^\rho$, $\rho>0$ we denote $X_\rho=X_\vfi^\infty$.

V.\ I.\ Matsaev and Ye.\ Z.\ Mogulski \cite{MaMo} established that if we take $P(z)=P(z, Z_f, s)$, $s\ge [\rho]+1$, $s\in \N$,  in the representation of Theorem \ref{t:factorization}, then the function $g$  has the form
\begin{equation} \label{e:mamo_repres}
    g(z)=\exp  \int_0^{2\pi} S_q(ze^{-i\theta}) \gamma(\theta) \, d\theta, \quad z\in \D ,
\end{equation}
where $q=[\rho]+1$, $S_{q}(z)=\Gamma(q+1)\Bigl(
 \frac 2{(1-z)^{q}} -1\Bigr)$ is the generalized Schwarz kernel, $\gamma$ is a real valued function such that $\gamma\in \mathop{\rm Lip} (q-\rho)$ for noninteger $\rho$, and $\gamma$ satisfies Zygmund's condition $|\gamma(\theta+h)-2\gamma(\theta)+\gamma(\theta-h)|\le Ch$ for integer $\rho$.

In \cite{Sha} F.Shamoyan  showed that non-zero factor $U_\alpha(z)$ in Djrbashian's representation \eqref{e:dj_star_repres}
can be written in the form  \eqref{e:mamo_repres} with $q$ not necessary integer such that  $q>\alpha$, and
 $(k=[q-\alpha])$ $$\int_0^{2\pi} \int_0^{2\pi} \frac{\gamma^{(k)} (t+\theta) -2\gamma^{(k)}(\theta) +\gamma^{(k)}(\theta-t)}{|t|^{1+q-\alpha}} \, dt \, d\theta<+\infty.
$$

In view of relation \eqref{e:0} the following problem arises naturally.

\begin{problem}  { Given $0\le \si \le \rho\le\si +1$,
describe the class $A_\si^\rho$ of analytic functions in $\D$ such that
$\rho_T[f]=\si$,  $\rho_M[f]=\rho$.}
\end{problem}

In \cite{Lin_Bor} Linden constructed canonical products from $A_\si^\rho$ when $\rho>1$, and $\rho-1\le \sigma\le \rho$.
In \cite{Chy_Grish} this problem was solved by the first author under the restriction that $\rho\ge 1$. A solution is given in terms of so called \emph{complete measure} of an analytic function in the sense of Grishin (see \cite{Gr1, FG}).

Let $f\in H(\D)$ be of the form
\begin{gather}
f(z)= C_q z^\la P(z, Z_f,q) \exp\Bigl\{ \int\limits_0^{2\pi}
{S_q(ze^{-i\theta})}{d\psi^*(\theta )}\Bigl\}, \label{e:bar_rep}
\end{gather}
where
$\psi^*\in BV[0,2\pi]$,   $\sum_{z_k\in Z_f} (1-|a_k|)^{q+1} <+\infty$, $\lambda \in \Z_+$, $C_q\in \C$.

Let $M$ be  Borel's subset of
$\overline{\D}$. A \emph{complete measure $\lambda_f$ of genus q
 in the sense of Grishin} is defined by of  as
\begin{equation}\label{e:cm}
  \lambda_f(M)= \sum _{Z_f \cap M} (1-|z_k|)^{q+1}+ \psi(M\cap \partial \D),
\end{equation}
where $\psi$ is the Stieltjes measure associated with $\psi^*$.

A characterization of $\lambda_f$ for $f\in A_\sigma^\rho$ is given in \cite[Theorem 4]{Chy_Grish}.
Another application of $\lambda_f$ can be found in \cite{Chy_illin}
\section{A concept of $\rho_\infty$-order}
Many theorems valid on analytic functions of finite order in $\D$ fail to hold when $\rho_M$-order is smaller than 1 (see e.g.  \cite{Chy_Grish}, \cite{L_rep},  \cite{Li_mp}).

In particular, for a Blaschke product $B$ we always have $0\le \nu[B]\le 1$, so Theorems \ref{t:lin} and \ref{t:sha_78} give no new information on zero distribution of $B$.


The question  arises:

\noindent {\bf  Question.}  {\it What kind of growth characteristic can describe zero distribution in the case when $\rho_M[f]\le 1$?}

For a meromorphic function $f(z)$, $z\in \D$, and $p\ge 1$ we define
\begin{equation*}
    m_p(r,f)=\biggl( \frac 1{2\pi} \int_0^{2\pi} |\log|f(\rr\th)||^p\, d\th\biggr)^{\frac 1p}, \quad 0<r<1.
\end{equation*}

We write
\begin{equation*}
    \rho_p[f]=\limsup_{r\uparrow1} \frac{\log m_p(r,f)}{-\log (1-r)}.
\end{equation*}
A characterization of $\rho_p$-orders can be found in \cite{Li92}.

We define $\rho_\infty$\textit{-order} of $f$ as
\begin{equation*}
    \rho_\infty[f]=\lim_{p\to\infty} \rho_p[f],
\end{equation*}
(existence of the limit follows from the fact that $L_p$-norms are monotone in $p$).
It follows from the First fundamental theorem of Nevanlinna  that $\rho_1[f]=\rho_T[f]$.    Besides, it is known (e.g. \cite{Li_mp}), that $\rho_M [f]\le \rho_p[f]+\frac 1p$ $(p>0)$, which generalizes~\eqref{e:0}. Consequently, $\rho_M[f]\le \rho_\infty[f]$.  Moreover,   Linden \cite{Li_mp} proved that $\rho_\infty[f]=\rho_M[f]$ provided that $\rho_M[f]\ge 1$. Thus, the values $\rho_\infty[f]$ and $\nu[f]$ have similar behavior with respect to the maximum modulus order, when $f$ is a canonical product.

\textit{Remark.} To omit confusion, we have to note that Linden  used the notation $\lambda_\infty(f)$ for $\rho_M[f]$. But he did not consider the  limit $\lim_{p\to\infty} \rho_p[f]$ when $\rho_M[f]<1$.

For a sequence $Z$ in $\D$ with  finite convergence exponent we define $\nu[Z]=\nu[P(z,Z,q)]$ for an appropriate choice of $q$. It is clear that the definition does not depend on $q$.

The following theorem answers the question  posed above.
\begin{atheorem}[{\cite[Theorem 1.1]{CH_pams_11}}] \label{t:rho_inf} Given a sequence  $Z$   in $\D$ such that $\nu=\nu[Z]<\infty$ and an integer $s$ such that $s\ge [\nu]+1$, we  define  the canonical product  $P_s(z)=P(z,Z,s)$.
 Then $\rho_\infty[P_s]=\nu$.
\end{atheorem}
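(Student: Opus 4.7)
The plan is to establish the equality $\rho_\infty[P_s]=\nu$ via two matching inequalities. The case $\nu>1$ is immediate from facts already recorded: by Theorem~\ref{t:lin} we have $\rho_M[P_s]=\nu\ge 1$, and by Linden's identity $\rho_\infty[f]=\rho_M[f]$ in the range $\rho_M[f]\ge 1$ (recalled in the paragraph preceding the statement) this gives $\rho_\infty[P_s]=\nu$. So the genuinely new content lies in the range $\nu\le 1$, where $\rho_M[P_s]$ may be strictly smaller than $\nu$ and both inequalities must be proved directly.

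For the upper bound $\rho_\infty[P_s]\le\nu$ in this range, it suffices to show $m_p(r,P_s)=O_{p,\ve}((1-r)^{-\nu-\ve})$ for every fixed $p\ge 1$ and $\ve>0$. Writing $\log|P_s(z)|=\sum_n\log|E(w_n(z),s)|$ with $w_n(z)=(1-|z_n|^2)/(1-\bar z_nz)$, I split the zero set into a \emph{far} part $\{n:|w_n(z)|<1/2\}$ and a \emph{close} part $\{n:|w_n(z)|\ge 1/2\}$. On the far part the Taylor-tail bound $|\log|E(w,s)||\le C_s|w|^{s+1}$ reduces the contribution to a convolution of the zero counting measure against the kernel $(1-|\zeta|^2)^{s+1}|1-\bar\zeta z|^{-s-1}$, which is bounded pointwise by $C(1-r)^{-\nu-\ve}$ thanks to the box count $\nu_1(r,P_s)\le(1-r)^{-\nu-\ve}$ (a direct consequence of the definition of $\nu$). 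On the close part I use $|\log|E(w_n,s)||\le|\log|(z-z_n)/(1-\bar z_nz)||+O_s(1)$ and group the close zeros into Whitney-type boxes of scale $2^{-j}(1-r)$; Minkowski's inequality within each dyadic scale, together with the uniform bound $\|\log|\cdot-z_n|\|_{L^p(\partial D(0,r))}\le C_p$ and the same zero count, yields the desired $O((1-r)^{-\nu-\ve})$ bound on the $L^p$-norm.

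For the lower bound $\rho_\infty[P_s]\ge\nu$, I use the definition of $\nu$ directly: there exist sequences $r_k\uparrow 1$ and $\vfi_k$ with $\nu(r_ke^{i\vfi_k})\ge N_k:=(1-r_k)^{-\nu+\ve}$. Taking $r$ close to $r_k$ and the arc $|\theta-\vfi_k|\le c(1-r_k)$, the $N_k$ zeros inside $\square(r_ke^{i\vfi_k})$ contribute to $-\log|P_s(\rr\theta)|$ a term of order $N_k\log(1/(1-r_k))$; the contributions from the remaining zeros are controlled pointwise by the upper-bound estimate of the previous paragraph and do not cancel this local peak once $\ve$ is small. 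Integrating $|\log|P_s(\rr\theta)||^p$ over this arc gives $m_p(r_k,P_s)\gtrsim N_k(1-r_k)^{1/p}$, whence $\rho_p[P_s]\ge\nu-\ve-1/p$; letting $p\to\infty$ and $\ve\to 0$ closes the argument. The principal obstacle is the $L^p$-control of the close part in the upper bound: a naive triangle inequality sums over all zeros, whose total number can far exceed $(1-r)^{-\nu}$, so the angular localization via Whitney decomposition is essential; a related subtlety in the lower bound is ensuring that the logarithmic peak from the $N_k$ local zeros survives the background contribution of all other zeros, which is resolved by invoking the upper-bound estimate as a pointwise majorant off the arc.
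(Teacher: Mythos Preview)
Note first that this theorem is quoted from \cite{CH_pams_11} and is not proved in the present paper; the nearest analogue here is the proof of Theorem~\ref{t:carl}. Its techniques (Tsuji's pointwise bounds outside the exceptional discs of Theorem~\ref{t:tsu_low}, the factorization $P_s=B_1B_2B_3$ on each arc of length $2(1-r)$, and Lemma~\ref{l:can_prod_est_com_mes}) do yield the upper bound $\rho_\infty[P_s]\le\nu$ once one checks that the hypothesis $\nu_1(t)\le(1-t)^{-\nu-\ve}$ implies the Carleson condition \eqref{e:carl_cond} with $\gamma=s+1-\nu-\ve$; here the standing assumption $s\ge[\nu]+1>\nu$ is what makes the dyadic sum converge. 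Your far/close splitting with a Whitney decomposition is a reasonable alternative route to the same upper bound.

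Your lower-bound argument, however, has a genuine gap. You assert that on the arc $|\theta-\vfi_k|\le c(1-r_k)$ the $N_k$ local zeros contribute to $-\log|P_s|$ a term of order $N_k\log\frac{1}{1-r_k}$, and that the remaining zeros cannot cancel it because they are ``controlled by the upper-bound estimate''. Both steps fail. First, for $z$ on the arc at distance $\asymp(1-r_k)$ from each local zero one has $|w_n|\asymp1$ and $|1-w_n|\asymp1$, so each term $\log|E(w_n,s)|$ is $O(1)$, not $O\bigl(\log\frac{1}{1-r_k}\bigr)$; worse, for $s\ge1$ the exponential correction $\Re\sum_{j\le s}w_n^j/j$ carries no definite sign, so the local sum need not satisfy $|\,\cdot\,|\gtrsim N_k$ at all. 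Second, the only pointwise bound you have on the far contribution is of order $(1-r)^{-\nu-\ve'}$, which \emph{dominates} your local term $N_k\asymp(1-r_k)^{-\nu+\ve}$; taking $\ve$ small does not reverse this inequality. The positivity device used in the paper's proof of Theorem~\ref{t:carl}\,(ii) --- writing $-\log|B|=\sum_k K(z,z_k)(1-|z_k|)+O(1)$ with the kernel $K\ge0$, so that every zero pushes $|\log|B||$ upward --- is precisely what breaks for $s\ge1$, and your sketch offers no substitute. A correct argument must either isolate a genuine Blaschke factor from $P_s$ and control the remaining (zero-free, hence harmonic) piece separately, or prove the companion inequality $\nu[f]\le\rho_\infty[f]$ for arbitrary $f\in H(\D)$ by an independent route and then specialize to $f=P_s$.
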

\begin{corollary*}[{\cite[Theorem 1.2]{CH_pams_11}}] \label{c:1} Let $f\in H(\D)$. Then $\nu[f]\le \rho_\infty[f]$.
\end{corollary*}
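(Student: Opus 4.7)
We would start by noting that we may assume $\rho_\infty[f]=\rho<\infty$, as otherwise the inequality is trivial. Fix $\ve>0$; the plan is to establish $\nu(\rr\vfi)\le C_\ve(1-r)^{-\rho-3\ve}$ uniformly in $\vfi$, from which $\nu[f]\le \rho+3\ve$ follows, and then let $\ve\to 0$. The bound $\rho_M[f]\le \rho_p[f]+1/p$ already recorded in the paper immediately gives $\rho_M[f]\le \rho$, so $\log M(t,f)\le C_\ve(1-t)^{-\rho-\ve}$ for $t$ near $1$; this will be used to control the Jensen-type integrals that appear.

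The square $\square(\rr\vfi)$ has bounded hyperbolic diameter but a direct computation reveals it cannot be enclosed in a single Euclidean disk $D(c,R)$ with $D(c,2R)\subset\D$. So the first step is to cover $\square(\rr\vfi)$ by an absolute number $N$ of Euclidean disks $D(c_j,R)$, $R=c_0(1-r)$ for a small absolute constant $c_0$, each satisfying $D(c_j,2R)\subset\D$; such a covering is obtained by partitioning the square into $O(1)$ hyperbolically bounded pieces. Jensen's formula on $D(c_j,2R)$ then yields, for $c_j\notin Z_f$,
\begin{equation*}
n(c_j,R)\log 2\le \frac{1}{2\pi}\int_0^{2\pi}\log|f(c_j+2Re^{i\theta})|\,d\theta-\log|f(c_j)|,
\end{equation*}
where $n(c_j,R)$ counts zeros of $f$ in $D(c_j,R)$; and $\nu(\rr\vfi)\le \sum_{j=1}^N n(c_j,R)$. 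The integral is majorized by $\log M(|c_j|+2R,f)\le C_\ve(1-r)^{-\rho-\ve}$.

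The main obstacle is the lower bound for $\log|f(c_j)|$. Since we retain the freedom to translate each $c_j$ through a region of size $\asymp 1-r$ while preserving the covering and $D(c_j,2R)\subset\D$, we will look for $c_j$ outside a ``bad'' set. Parametrize admissible centers as $c_j(\psi)=\rho_j e^{i\psi}$ on a circle of radius $\rho_j$ with $1-\rho_j\asymp 1-r$ and $\psi$ ranging over an arc $I_j$ of length $|I_j|\asymp 1-r$. Chebyshev's inequality in $L^p$ gives
\begin{equation*}
\mes\Bigl\{\psi\in[0,2\pi):\bigl|\log|f(\rho_j e^{i\psi})|\bigr|>\lambda\Bigr\}\le \frac{2\pi\,m_p(\rho_j,f)^p}{\lambda^p}.
\end{equation*}
Choosing $\lambda=2m_p(\rho_j,f)(2\pi/|I_j|)^{1/p}$ shrinks the bad set to less than half of $|I_j|$, so some admissible $c_j$ achieves $|\log|f(c_j)||\le \lambda$. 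By the definition of $\rho_p[f]$, $m_p(\rho_j,f)\le C_\ve(1-r)^{-\rho_p[f]-\ve}$, and hence $\lambda\le C_\ve(1-r)^{-\rho_p[f]-\ve-1/p}$.

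Finally, since $\rho_p[f]\to\rho$ as $p\to\infty$, pick $p$ so large that $\rho_p[f]\le \rho+\ve$ and $1/p\le\ve$; then $\lambda\le C_\ve(1-r)^{-\rho-3\ve}$ and therefore $n(c_j,R)\le C_\ve(1-r)^{-\rho-3\ve}$. Summing over $j=1,\dots,N=O(1)$ delivers $\nu(\rr\vfi)\le C_\ve(1-r)^{-\rho-3\ve}$, giving $\nu[f]\le \rho+3\ve$, after which $\ve\to 0$ completes the proof. The decisive step will be the lower bound for $\log|f(c_j)|$: a pointwise estimate is unavailable from growth information alone, and the Chebyshev-in-$L^p$ device is precisely where $\rho_\infty$ plays a role distinct from $\rho_M$ or $\rho_T$.
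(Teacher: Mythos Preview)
Your argument is correct. In this survey the corollary is stated without proof, as a consequence of Theorem~\ref{t:rho_inf} with details deferred to \cite{CH_pams_11}; the point is that the inequality $\nu\le\rho_\infty$ in Theorem~\ref{t:rho_inf} is proved by an argument that never uses the product structure of $P_s$ --- it only extracts a local zero count from $L^p$-growth of $\log|f|$ --- and therefore carries over verbatim to an arbitrary $f\in H(\D)$. Your proof is precisely such an argument: Jensen's formula on $O(1)$ small disks covering $\square(re^{i\vfi})$, the upper bound via $\rho_M[f]\le\rho_\infty[f]$, and the decisive lower bound $\log|f(c_j)|\ge-\lambda$ obtained by Chebyshev's inequality applied to the $p$-means, followed by $p\to\infty$. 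This is the standard (and essentially the only) mechanism by which $\rho_\infty$ controls pointwise lower bounds, so your route matches the intended one.

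One point deserves a line of extra care: the claim that each center $c_j$ may be slid along an arc of length $\asymp 1-r$ \emph{while the covering is preserved}. The clean formulation is to first cover $\square(re^{i\vfi})$ by $N=O(1)$ disks $D(c_j^{\,0},R/2)$ with $R=c_0(1-r)$ and $c_0$ small (say $c_0<1/6$); then for every $c_j\in D(c_j^{\,0},R/2)$ one has $D(c_j^{\,0},R/2)\subset D(c_j,R)$, so the enlarged disks still cover the square, and moreover $D(c_j,2R)\subset\D$ with $1-|c_j|-2R\asymp 1-r$, which is what you need for the $\log M$ bound. Taking $\rho_j=|c_j^{\,0}|$, the admissible arc $I_j=\{\psi:\rho_je^{i\psi}\in D(c_j^{\,0},R/2)\}$ has length $\asymp R\asymp 1-r$, and your Chebyshev step goes through exactly as written. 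With this adjustment the proof is complete.
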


\begin{example} Let $z_k=1-1/(k \log^2 k)$, $k\in \{3, \dots\}$.
We consider the Blaschke  product $B(z, Z)$. Since $|B|$ is bounded in $\D$, we have $\rho_M[B]=\rho_T[B]=0$, and consequently $\rho_\infty[B]\le 1$.

On the other hand, it is easy to check that
\[n(r, B)\sim \frac 1{(1-r)\log ^2(1-r)}, \quad r\uparrow 1,\] and
\[ \frac {d_1}{(1-r)\log ^2(1-r)}\le  \nu(r)\le \frac {d_2}{(1-r)\log ^2(1-r)}, \quad r\uparrow 1, \] for some positive constants $d_1$, $d_2$.
Hence,  $\nu[B]= 1$, and by the corollary $\rho_\infty[B]=1$.
\end{example}

Taking into account Theorem \ref{c:1} 
we deduce that $\max\{ \rho_M[P], \rho_M[g]\}\le \rho_\infty[f]$ in Theorem \ref{t:factorization}.
A counterpart of Theorem \ref{t:factorization} is valid without restrictions on the value of order.

\begin{atheorem}[{\cite[Theorem 2.1]{CH_pams_11}}]\label{t:factoriz_infty}
Let $f$ be  analytic in $\D$, and  of finite order $\rho_\infty[f]$. Then
\[ f(z)=z^p P(z) g(z), \]
where $P$ is a canonical product displaying the zeros of $f$, $p$ is nonnegative integer, $g$ is non-zero and both $P$ and $g$ are analytic and of $\rho_\infty $-order at most $\rho_\infty[f]$.
\end{atheorem}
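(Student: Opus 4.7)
The plan is to mimic the proof of Theorem \ref{t:factorization}, substituting $\rho_\infty$ for $\rho_M$ throughout and invoking the Corollary just stated ($\nu[f]\le\rho_\infty[f]$) in place of Linden's $\rho_M$-estimate for the zero density.

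First I verify that $Z_f$ has finite exponent of convergence. Monotonicity of $L^p$-means on the probability space $([0,2\pi],\tfrac{d\th}{2\pi})$ yields $\rho_T[f]=\rho_1[f]\le\rho_\infty[f]<\infty$; Nevanlinna's first fundamental theorem then gives $\mu[Z_f]\le\rho_T[f]<\infty$, so $\nu[Z_f]$ is well defined, and the Corollary above delivers $\nu[Z_f]\le\rho_\infty[f]$. Pick any integer $s\ge [\rho_\infty[f]]+1$. Since $s>\mu[Z_f]$, the canonical product $P(z):=P(z,Z_f,s)$ from \eqref{e:can_prod} converges in $\D$, and since also $s\ge [\nu[Z_f]]+1$, Theorem \ref{t:rho_inf} gives $\rho_\infty[P]=\nu[Z_f]\le\rho_\infty[f]$.

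Let $k\in\Z_+$ denote the multiplicity of $0$ as a zero of $f$ (with $k=0$ if $f(0)\ne 0$) and set $g(z):=f(z)/(z^k P(z))$. Because $z^k P(z)$ carries exactly the zero set of $f$ (with multiplicities), $g\in H(\D)$ is non-vanishing. For every $r\in(0,1)$ and $\th$,
\[
\bigl|\log|g(\rr\th)|\bigr|\le \bigl|\log|f(\rr\th)|\bigr|+\bigl|\log|P(\rr\th)|\bigr|+k|\log r|,
\]
so Minkowski's inequality in $L^p[0,2\pi]$ yields $m_p(r,g)\le m_p(r,f)+m_p(r,P)+k|\log r|$ for every $p\ge 1$. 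Since $|\log r|=O(1)$ as $r\uparrow 1$, this forces $\rho_p[g]\le \max\{\rho_p[f],\rho_p[P]\}$ for each $p$, and letting $p\to\infty$ gives $\rho_\infty[g]\le \max\{\rho_\infty[f],\rho_\infty[P]\}\le\rho_\infty[f]$, which completes the factorization with the claimed bounds.

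The substantive ingredient is the Corollary $\nu[Z_f]\le\rho_\infty[f]$, which removes the restriction $\rho_M[f]\ge 1$ of Theorem \ref{t:factorization}; the low-order regime is the real obstacle of the problem, and it is already handled by the cited results. Once those are accepted, the convergence of $P$, the identification of $\rho_\infty[P]$ via Theorem \ref{t:rho_inf}, and the Minkowski estimate for $g$ fit together with no further difficulty.
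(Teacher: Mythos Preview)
The paper does not actually prove this theorem here; it is quoted from \cite{CH_pams_11} and only the surrounding discussion sketches the idea (namely, combine the Corollary $\nu[f]\le\rho_\infty[f]$ with Theorem~\ref{t:rho_inf} and then bound the zero-free factor). Your argument carries out precisely that sketch and is correct: the choice $s\ge[\rho_\infty[f]]+1$ guarantees both convergence of $P$ and the hypothesis $s\ge[\nu]+1$ of Theorem~\ref{t:rho_inf}, and the Minkowski step for $g$ together with monotonicity of $p\mapsto\rho_p$ gives the claimed $\rho_\infty$-bound.
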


Some another applications of the concept of $\rho_\infty$-order such as logarithmic derivative estimates can be found in \cite{CH_pams_11}.

The proof of Theorem  \ref{t:rho_inf} relies on  the inequality $s\ge [\nu]+1$.
 Since the theorem is  not applicable for Blaschke products one may ask what are  relations between zero distribution of a Blaschke product and its $\rho_\infty$-order.

 Here we prove the following Carleson-type result.
 Let $$S(\vfi, \delta)=\{ \rho e^{i\theta}  \in \overline{\mathbb{D}}: \rho \ge 1-\delta, -\pi\delta< \theta -\varphi \le \pi \delta\}$$
be the Carleson square based on the arc $[e^{i(\varphi-\pi\delta)}, e^{i(\vfi+\pi\delta)}]$.

\begin{theorem} \label{t:carl}
Let $Z$ be a sequence in $\D$ such that $\sum_{z_k\in Z} (1-|z_k|)^{s+1}<+\infty$ for some nonnegative integer $s$, $P_s(z)=P(z, Z,s)$.

\begin{itemize}
  \item[i)] Let $\gamma\in (0,s+1]$.
 If
\begin{equation}\label{e:carl_cond}
\sum_{z_n\in S(\vfi, \delta)} (1-|z_n|)^{s+1} \le C_1 \delta^\gamma, \quad \delta \in (0,1),
\end{equation}
for some constant $C_1$ independent of $\vfi$ and $\delta$,
then  for all $p\ge 1$
\begin{equation}\label{e:mp_est_car}
    m_p(r, \log|P_s|)\le \left\{
                                \begin{array}{ll} \displaystyle
                                  C_2 (1-r)^{\gamma-s-1} \Bl \log \frac 1{1-r}+1\Br , & {\gamma\in (0,s+1);} \\
                                  C_2 (\log^2 (1-r)+1), & {\gamma=s+1.}
                                \end{array}
                              \right.
\end{equation}
  \item [ii)]
If $s=0$, and for all $p\ge 1$ we have that $m_p(r, \log|B|)\le K (1-r)^{1-\gamma}$ for some constant $K$ independent of $p$ and $r$ and $\gamma\in (0,1]$ then \eqref{e:carl_cond} holds.
\end{itemize}
\end{theorem}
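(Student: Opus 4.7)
My plan is to treat part (ii) first, since it is substantially easier, and then develop the dyadic Carleson decomposition required for part (i).

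For part (ii), uniformity in $p$ of the bound $m_p(r,\log|B|)\le K(1-r)^{1-\gamma}$ permits passage to the limit $p\to\infty$, so that $-\log|B(re^{i\th})|\le K(1-r)^{1-\gamma}$ for almost every $\th$. The elementary convexity inequality $-\log t\ge(1-t^2)/2$ on $(0,1]$, applied to each pseudohyperbolic factor of $B$, yields
$$-\log|B(z)|\ge\frac12\sum_n\frac{(1-|z|^2)(1-|z_n|^2)}{|1-\bar z_n z|^2}.$$
For a Carleson square $S(\vfi,\delta)$ I would evaluate at $z_0=(1-\delta)e^{i\vfi}$: every zero $z_n\in S(\vfi,\delta)$ satisfies $|1-\bar z_n z_0|\le C\delta$ and $1-|z_0|^2\ge\delta$, so each summand above is at least $c(1-|z_n|)/\delta$. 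Combined with the hypothesis this yields $\sum_{z_n\in S(\vfi,\delta)}(1-|z_n|)\le C'\delta^{2-\gamma}\le C''\delta^\gamma$, the last inequality holding because $\gamma\le 1$.

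For part (i) the starting point is the identity $\log E(w,s)=-\sum_{k>s}w^k/k$ for $|w|<1$, giving $\big|\log|E(w,s)|\big|\le|w|^{s+1}/\bigl((s+1)(1-|w|)\bigr)$. Substituting $w=w_n(z)=(1-|z_n|^2)/(1-\bar z_n z)$, one obtains for zeros with $|w_n(z)|\le 1/2$ the regular pointwise bound $|\log|E(w_n(z),s)||\le C(1-|z_n|)^{s+1}/|1-\bar z_n z|^{s+1}$. Fixing $z=re^{i\th}$, I decompose the zero set dyadically via $A_k=\{n:|1-\bar z_n z|\in[2^k(1-r),2^{k+1}(1-r))\}$ for $k=0,1,\ldots,K$ with $K\asymp\log\frac{1}{1-r}$. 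The geometric key is that $A_k\subset S(\th,C_3\cdot 2^k(1-r))$, and then \eqref{e:carl_cond} delivers
$$\sum_{n\in A_k}\frac{(1-|z_n|)^{s+1}}{|1-\bar z_n z|^{s+1}}\le C_1(2^k(1-r))^{\gamma-s-1}.$$
Summing in $k$ produces a convergent geometric series giving $C(1-r)^{\gamma-s-1}$ when $\gamma<s+1$, and a sum of order $\log\frac{1}{1-r}$ when $\gamma=s+1$.

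The zeros with $|w_n(z)|>1/2$, i.e.\ those pseudohyperbolically close to $z$, require separate treatment: here $\log|1-w_n|=\log|z_n|+\log\rho(z,z_n)$ carries a logarithmic singularity. The Carleson hypothesis applied to the smallest relevant square $S(\th,C(1-r))$ controls the total $(s+1)$-power mass of these close zeros; combining this with standard estimates for $\int_0^{2\pi}|\log\rho(re^{i\th},z_n)|^p\,d\th$ produces the additional factor $\log\frac{1}{1-r}+1$ in \eqref{e:mp_est_car} and the endpoint $\log^2$. The principal obstacle will be verifying that this close-zero contribution is bounded \emph{uniformly in} $p$: the naive $L^p$ estimate on a single $\log\rho$ grows with $p$, so one has to exploit \eqref{e:carl_cond} both to limit the number of close zeros and to arrange the singular integrals in a $p$-independent way.
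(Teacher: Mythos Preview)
Your treatment of part~(ii) is essentially the same strategy as the paper's, only packaged slightly differently: you pass to $p=\infty$ first and then evaluate the Poisson-type lower bound at a single point, whereas the paper keeps the $L^p$ integral, bounds it below via the pointwise inequality $|\log|B(re^{i\vfi})||\ge c\,\lambda(\vfi,r)/(1-r)$, and only then lets $p\to\infty$. Both routes are correct. One small technical point: your chosen point $z_0=(1-\delta)e^{i\vfi}$ could accidentally be a zero of $B$; either perturb it slightly or, as the paper does, avoid pointwise evaluation by working with the $L^p$ integral directly. Also note that your bound $\sum_{S(\vfi,\delta)}(1-|z_n|)\le C'\delta^{2-\gamma}$ is actually \emph{stronger} than the stated Carleson conclusion; the hypothesis of~(ii) as printed almost certainly contains a sign typo in the exponent (it should read $(1-r)^{\gamma-1}$ to match the conclusion of~(i) with $s=0$), and with the intended hypothesis one lands exactly on $\delta^\gamma$.

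For part~(i) your dyadic-shell estimate for the ``far'' zeros is exactly the content of the paper's Lemma preceding the proof, and this step is complete. The gap is in the ``close'' zeros, and it is more structural than you indicate. Your partition into $|w_n(z)|\le\tfrac12$ versus $|w_n(z)|>\tfrac12$ depends on the evaluation point $z=re^{i\th}$, so when you pass to $\int_0^{2\pi}|\log|P_s(re^{i\th})||^p\,d\th$ the set of ``close'' zeros varies with $\th$ and the factorisation is not well-defined as a decomposition of $P_s$. The paper resolves this by first covering $[0,2\pi]$ by $O\bigl((1-r)^{-1}\bigr)$ arcs of length $2(1-r)$; on each arc $\gamma_r$ the exceptional set $F(r)=\{m:D_m\cap\gamma_r\ne\emptyset\}$ (with $D_m$ the Tsuji discs of radius $(1-|z_m|^2)^{s+4}$) is \emph{fixed}, and one writes $P_s=B_1B_2B_3$ where $B_1$ collects the factors with $m\notin F(r)$, $B_2$ the polynomial tails $\exp\sum_{j=1}^s w_m^j/j$ for $m\in F(r)$, and $B_3$ the Blaschke factors for $m\in F(r)$. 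Tsuji's lower bound plus the dyadic lemma handles $B_1$; the trivial bound $|\log|B_2||\le C|F(r)|$ together with the cardinality estimate $|F(r)|\le C(1-r)^{\gamma-s-1}$ handles $B_2$; and the singular piece
\[
\int_{\tau-(1-r)}^{\tau+(1-r)}|\log|B_3(re^{i\th})||^p\,d\th\le C\,|F(r)|^p(1-r)
\]
is quoted from Linden~\cite[p.~124]{Li_mp}. This last citation is precisely what fills the ``principal obstacle'' you flag: it gives the uniform-in-$p$ control of finitely many pseudohyperbolic $\log$-singularities over a short arc. Without the localisation to short arcs and the fixed (not $z$-dependent) exceptional set, your scheme does not directly yield a clean $L^p$ bound, so this is the missing ingredient you should incorporate.
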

For a Blaschke product we define
 $\lambda(\vfi, r)=\sum_{z_k\in Z_B\cap S(\vfi, \frac{1-r}2)} (1-|z_k|)$.

\begin{corollary} Let  $B$  be a Blaschke product. Set $$t[B]=\sup \{\gamma \ge 0:  \max_\vfi \lambda(\vfi, r)=O((1-r)^{\gamma}  )\}.$$ Then $\rho_\infty [B]=1-t[B]$.
\end{corollary}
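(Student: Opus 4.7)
The plan is to derive the corollary directly from Theorem~\ref{t:carl}, applied with $s = 0$ (admissible because $\sum_{z_k \in Z_B}(1-|z_k|) < \infty$ for a Blaschke product). The first step is a bookkeeping identification: $\lambda(\vfi, r)$ is precisely the Carleson sum in \eqref{e:carl_cond} evaluated at $\delta = (1-r)/2$. As $r$ runs over $(0,1)$, this $\delta$ runs over $(0,1/2)$, and a standard dyadic comparison of Carleson squares shows that the condition $\max_\vfi \lambda(\vfi, r) = O((1-r)^\gamma)$ is equivalent to the full condition \eqref{e:carl_cond} at exponent $\gamma$. Hence
\[
t[B] = \sup\bigl\{\gamma \in (0,1]: \text{\eqref{e:carl_cond} with } s = 0 \text{ holds at exponent } \gamma\bigr\}.
\]

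For the upper bound $\rho_\infty[B] \le 1-t[B]$, I would fix any $\gamma < t[B]$, so that \eqref{e:carl_cond} holds. Part (i) of Theorem~\ref{t:carl} with $s=0$ then yields the uniform-in-$p$ estimate $m_p(r,\log|B|) \le C_2(1-r)^{\gamma-1}\bigl(\log\frac{1}{1-r}+1\bigr)$ for all $p\ge 1$. Taking $\limsup_{r\uparrow 1}$ gives $\rho_p[B] \le 1-\gamma$ for every $p$, hence $\rho_\infty[B] = \lim_{p\to\infty}\rho_p[B] \le 1-\gamma$. Letting $\gamma \uparrow t[B]$ concludes this direction.

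For the lower bound $\rho_\infty[B] \ge 1-t[B]$, I would fix any $\gamma \in (t[B], 1]$, so that \eqref{e:carl_cond} fails at $\gamma$, and apply the contrapositive of part (ii): no constant $K$ independent of $p\ge 1$ and $r \in [0,1)$ satisfies $m_p(r,\log|B|) \le K(1-r)^{1-\gamma}$ uniformly. Since $p \mapsto m_p(r)$ is non-decreasing and $\lim_{p\to\infty}m_p(r) = m_\infty(r)$ (probability-space $L^p$ norms increase to $L^\infty$), the non-existence of a uniform bound is equivalent to $m_\infty(r)/(1-r)^{1-\gamma}$ being unbounded on $[0,1)$. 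From this I would extract a sequence $r_n\to 1$ along which the relevant concentration of zeros in a Carleson square $S(\vfi_n,(1-r_n)/2)$ is detected already by a finite-$p$ norm, producing $\rho_{p_0}[B] \ge 1-\gamma$ for some fixed $p_0$ and hence $\rho_\infty[B] \ge 1-\gamma$. Sending $\gamma \downarrow t[B]$ completes the proof.

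The main obstacle is the last extraction. Part (ii) is formulated as a uniform-in-$p$ bound and its negation naturally yields information only about $m_\infty$, whereas $\rho_\infty[B]$ is defined as the limit of the finite-$p$ quantities $\rho_p[B]$. Bridging the two requires either reopening the proof of part (ii) to trace an explicit lower bound on some $m_{p_0}$ (with $p_0$ depending on $\gamma$ but independent of $r_n$) from a concentrated Carleson-square violation, or invoking the inequality $\nu[B] \le \rho_\infty[B]$ from Corollary~C together with the separate dyadic estimate $\nu[B] \ge 1-t[B]$; in either route, the delicate point is ensuring a fixed $p_0$ captures the concentration, rather than a $p_n$ drifting to infinity with $n$.
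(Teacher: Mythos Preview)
Your plan is the natural one and matches the paper's intent: the corollary is stated without proof immediately after Theorem~\ref{t:carl}, so it is meant to follow from parts (i) and (ii) with $s=0$. Your upper bound $\rho_\infty[B]\le 1-t[B]$ via part~(i) is clean and correct (the constant $C_2$ in \eqref{e:mp_est_car} really is independent of $p$, as one sees from the $C_7^p$, $C_8^p$, \ldots\ in the proof).

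You are also right that the lower bound does not drop out of the bare \emph{statement} of part~(ii): its contrapositive only denies a bound with a single $K$ uniform in $p$, while $\rho_\infty[B]<1-\gamma$ a~priori gives only $m_p(r)\le K_p(1-r)^{\gamma-1}$ with $K_p$ possibly depending on $p$. This is a genuine gap, and you have located it precisely.

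Your first suggested route closes it, and the missing observation is short. From the proof of part~(ii) one has the pointwise inequality
\[
|\log|B(re^{i\vfi})||\ \ge\ C_{15}\,\frac{\lambda(\vfi,r)}{1-r}\qquad\text{for all }\vfi.
\]
Let $\vfi_0$ maximise $\lambda(\cdot,r)$ and set $\tilde r=2r-1$, so $1-\tilde r=2(1-r)$. A direct inclusion of Carleson squares shows that $S\bigl(\vfi_0,\tfrac{1-r}{2}\bigr)\subset S\bigl(\vfi,1-r\bigr)=S\bigl(\vfi,\tfrac{1-\tilde r}{2}\bigr)$ whenever $|\vfi-\vfi_0|\le \pi(1-r)/2$; hence $\lambda(\vfi,\tilde r)\ge \lambda(\vfi_0,r)=\max_\psi\lambda(\psi,r)$ on an interval of length $\pi(1-r)$. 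Applying the pointwise bound at radius $\tilde r$ on that interval yields
\[
m_p(\tilde r,\log|B|)\ \ge\ c\,\frac{\max_\psi \lambda(\psi,r)}{1-r}\,(1-r)^{1/p},
\]
with $c>0$ independent of $p$ and $r$. Taking $\limsup$ and using $\liminf_{r\uparrow1}\dfrac{\log\max_\psi\lambda(\psi,r)}{\log(1-r)}=t[B]$ gives $\rho_p[B]\ge 1-t[B]-\tfrac1p$ for every $p$, hence $\rho_\infty[B]\ge 1-t[B]$. So no sequence $p_n\to\infty$ is needed: the interval on which the concentration is felt has length $\asymp(1-r)$, and the resulting factor $(1-r)^{1/p}$ is exactly what disappears when $p\to\infty$.
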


\begin{corollary} If $B$ is an interpolating Blaschke product, then $m_p(r, \log|B|)\le C (\log^2 \frac 1{1-r}+1)$ for all $p\ge 1$.
\end{corollary}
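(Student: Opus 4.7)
The plan is simply to match the hypotheses of part i) of Theorem \ref{t:carl} with $s=0$ and $\gamma=s+1=1$, and to read off the bound $m_p(r,\log|B|)\le C_2(\log^2(1-r)+1)$, which (since $\log(1-r)<0$) is exactly the desired estimate $C(\log^2\frac{1}{1-r}+1)$.

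The only nontrivial step is to verify the Carleson-square condition \eqref{e:carl_cond} for the zero sequence $Z_B$ of an interpolating Blaschke product, with exponent $\gamma=1$; that is,
\begin{equation*}
\sum_{z_n\in S(\varphi,\delta)\cap Z_B}(1-|z_n|)\le C_1\delta,\qquad \delta\in(0,1),
\end{equation*}
uniformly in $\varphi$. This is the classical theorem of Carleson characterizing interpolating sequences: a sequence $(z_k)\subset\D$ is interpolating (i.e.\ uniformly separated in the pseudohyperbolic metric) if and only if the measure $\mu=\sum_k(1-|z_k|)\delta_{z_k}$ is a Carleson measure, and for interpolating sequences one has the linear Carleson bound above. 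I would cite this result (Carleson; Shapiro--Shields) rather than reprove it, since it is independent of anything developed in the paper.

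Once \eqref{e:carl_cond} with $s=0$, $\gamma=1$ is in hand, Theorem \ref{t:carl}(i), second alternative of \eqref{e:mp_est_car}, gives
\begin{equation*}
m_p(r,\log|B|)\le C_2(\log^2(1-r)+1)
\end{equation*}
for every $p\ge 1$, with a constant independent of $p$, which is the conclusion.

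The main (indeed only) obstacle is ensuring the quoted Carleson characterization is the right statement of ``interpolating'' in the paper's convention; since the authors write ``interpolating Blaschke product'' without qualification, I would adopt the standard $H^\infty$-interpolating convention, for which Carleson's theorem applies directly, making the proof a one-line application of Theorem \ref{t:carl}.
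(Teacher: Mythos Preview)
Your proposal is correct and is precisely the argument the paper has in mind: the corollary is stated without proof as an immediate consequence of Theorem~\ref{t:carl}(i) with $s=0$ and $\gamma=s+1=1$, the Carleson-square hypothesis \eqref{e:carl_cond} being supplied by Carleson's characterization of interpolating sequences as those for which $\sum_k(1-|z_k|)\delta_{z_k}$ is a Carleson measure. The only minor point you might add is that $P(z,Z,0)=C\,B(z,Z)$ with $C=\prod_k|z_k|$, so the bound for $m_p(r,\log|P_0|)$ transfers to $m_p(r,\log|B|)$ up to an additive constant absorbed into $C_2$.
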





\smallskip

\section{Proof of Theorem \ref{t:carl}}

We start with proving of i).
We write $E_m(\rr\vfi) =S\bigl(\vfi, (1-r)2^{m-1}\bigr)$, $m\in \N$, $E_0(z)=\varnothing$. So $E_1(\rr\vfi)=S(\vfi, 1-r)$, and $E_m(\rr\vfi) =\overline{\D}$ for $m\ge m(r)=\bigl[\log_2 \frac 1{1-r}\bigr]$.

\begin{lemma} \label{l:can_prod_est_com_mes} Let $Z$  be a  sequence in $\D$ such that  $\sum_{z_k\in Z} (1-|z_k|)^{s+1}<\infty$. Suppose that for some $K$ and $\gamma\in (0, s+1]$ condition \eqref{e:carl_cond} holds.
Then
\begin{equation*}
 \sum_{k=1}^\infty \Bm \frac{1-|z_k|^2}{1-z\bar z_k}\Bm^{s+1} \le \begin{cases} \displaystyle \frac {C_3}{(1-|z|)^{s+1-\gamma}}, & \gamma\in (0, s+1),  \\
C_3 \log \frac 1{1-|z|}, & \gamma=s+1.
\end{cases} \quad z\in \D,
\end{equation*}
for some constant $C_3=C_3(s, \gamma)>0$.
\end{lemma}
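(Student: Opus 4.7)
The plan is to partition the points $z_k$ via the dyadic annular regions $E_m(re^{i\varphi})\setminus E_{m-1}(re^{i\varphi})$ around $z=re^{i\varphi}$ (introduced just before the lemma) and estimate each piece by combining a lower bound on $|1-z\bar z_k|$ with the Carleson mass bound \eqref{e:carl_cond}. I may assume $r=|z|\ge 1/2$; otherwise every summand is bounded by a universal constant and the total sum is controlled by the finite quantity $\sum_k(1-|z_k|)^{s+1}$.

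The first step is an elementary distance estimate. Starting from the identity
\begin{equation*}
|1-z\bar\zeta|^2 = (1-|z||\zeta|)^2 + 2|z||\zeta|\bigl(1-\cos(\arg\zeta-\arg z)\bigr)
\end{equation*}
and using $1-\cos x\ge \frac{2}{\pi^2}x^2$, I would deduce that for $\zeta\in\D$ with $|\arg\zeta-\arg z|\le\pi$,
\begin{equation*}
|1-z\bar\zeta|\ge c_0\bigl((1-|z|)+(1-|\zeta|)+|\arg\zeta-\arg z|\bigr).
\end{equation*}
For $z_k\in E_m\setminus E_{m-1}$ with $m\ge 2$, failure to lie in $E_{m-1}=S(\varphi,(1-r)2^{m-2})$ forces either $1-|z_k|>(1-r)2^{m-2}$ or $|\arg z_k-\varphi|>\pi(1-r)2^{m-2}$, and the previous inequality then yields $|1-z\bar z_k|\ge c_1(1-r)2^m$; the case $m=1$ follows directly from $|1-z\bar z_k|\ge 1-r$.

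Combining $1-|z_k|^2\le 2(1-|z_k|)$ with this lower bound and applying \eqref{e:carl_cond} with $\delta=(1-r)2^{m-1}$, the $m$-th annular contribution satisfies
\begin{equation*}
\sum_{z_k\in E_m\setminus E_{m-1}}\Bm\frac{1-|z_k|^2}{1-z\bar z_k}\Bm^{s+1}\le \frac{2^{s+1}}{(c_1(1-r)2^m)^{s+1}}\sum_{z_k\in E_m}(1-|z_k|)^{s+1}\le C_2(1-r)^{\gamma-s-1}2^{m(\gamma-s-1)}.
\end{equation*}
Summing over $1\le m\le m(r)+1$ then produces a geometric series of ratio $2^{\gamma-s-1}$: when $\gamma<s+1$ the series converges and yields $C_3(1-|z|)^{\gamma-s-1}$, whereas when $\gamma=s+1$ each term is $O(1)$ and the number of terms is $\asymp\log\tfrac{1}{1-r}$, giving the logarithmic bound. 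For $m$ beyond $m(r)+1$ the annulus is empty because $E_m=\overline\D$, so no tail remains.

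The main obstacle will be the clean geometric passage from $z_k\notin E_{m-1}$ to the lower bound $|1-z\bar z_k|\gtrsim (1-r)2^m$, especially near $m\approx m(r)$ where $(1-r)2^{m-1}\uparrow 1$ and \eqref{e:carl_cond} is only assumed for $\delta\in(0,1)$; this boundary layer can be absorbed by truncating at $\delta=1/2$ and estimating the leftover annulus trivially against the finite total mass $\sum_k(1-|z_k|)^{s+1}$. Beyond this, the argument is standard dyadic bookkeeping.
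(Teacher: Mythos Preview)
Your proposal is correct and follows essentially the same route as the paper: the same dyadic exhaustion $E_m\setminus E_{m-1}$, the same lower bound $|1-z\bar z_k|\gtrsim(1-r)2^m$ on each shell, the Carleson hypothesis \eqref{e:carl_cond} to bound the mass in $E_m$, and the resulting geometric series in $2^{m(\gamma-s-1)}$. The paper simply asserts the distance lower bound as ``easy to see'' and does not separately discuss the cases $r<1/2$ or the boundary layer near $m(r)$; your more explicit treatment of these points is fine but does not constitute a different method.
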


\begin{proof}[Proof of the lemma] It is easy to see that  $|1-r\rho_k e^{i(\vfi-\theta_k)}|\ge C_4 (1-r)2^m$ for $z_k=\rho_k e^{i\theta _k}\in\D\setminus E_m$ with  some absolute constant $C_4$.
Then
\begin{gather*}
    \sum_{k} \frac{(1-|z_k|^2)^{s+1}}{|1-re^{i\vfi} \bar z_k|^{s+1}} =\sum_{m=1}^{m(r)} \sum_{z_k\in E_m\setminus E_{m-1}} \frac{(1-\rho_k^2)^{s+1}} {|1-r\rho e^{i(\vfi-\theta_k)}|} \le \\ \le  \frac{2^{s+1}}{(C_4(1-r)2^{m-1})^{s+1}} \sum_{z\in E_m} (1-\rho_k)^{s+1}   \le \frac {4^{s+1}}{(C_4(1-r))^{s+1} } \sum_{m=1}^{m(r) } \frac{C_1((1-r)2^{m})^\gamma}{2^{m(s+1)}} \le \\ \le \frac{C_5(s)}{(1-r)^{s+1-\gamma}} \sum_{m=1}^{m(r)} 2^{m(\gamma-s-1)}.
\end{gather*}
The last sum is bounded by a constant  depending on $\gamma$ and $s$ for $\gamma\in (0,s+1)$, and equals $m(r)$ in the case $\gamma=s+1$. This implies the assertion of the lemma.
\end{proof}

We shall need some known results.
\begin{atheorem}[{see \cite[Theorem V.24, p.222; Theorem V.25, p.224]{Tsuji}}] \label{t:tsu_low}
For the canonical product $P_s(z)$
\begin{equation}\label{e:up_est_prod}
    \log^+{|P_s(z)|}\le C_6(s)  \sum_m \Bm \frac {1-|z_m|^2}{1-z\bar z_m}\Bm ^{s+1}, \quad z\in \D, \; \sum_m (1-|z_m|)=+\infty ;
\end{equation}
  if $D_m$ denotes the disc $D\Bl z_m, (1-|z_m|^2)^{s+4}\Br$ then
\begin{equation}\label{e:low_est_prod}
    \log^+\frac1{|P_s(z)|}\le K\log \frac 1{1-|z|} \sum_m \Bm \frac {1-|z_m|^2}{1-z\bar z_m}\Bm ^{s+1}, \quad \frac12\le |z|<1,\; z\not\in \bigcup_m D_m.
\end{equation}
\end{atheorem}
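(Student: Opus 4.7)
For part~(i) my plan is to isolate $\log^+|P_s|$ and $\log^-|P_s|$ and treat each pointwise. Lemma~\ref{l:can_prod_est_com_mes} converts the Carleson hypothesis \eqref{e:carl_cond} into a pointwise bound on $\Sigma(z):=\sum_m \Bm \frac{1-|z_m|^2}{1-z\bar z_m}\Bm^{s+1}$ by $C\phi_\gamma(1-|z|)$, where $\phi_\gamma(t)=t^{\gamma-s-1}$ in the regime $\gamma\in(0,s+1)$ and $\phi_\gamma(t)=\log(1/t)$ in the critical case $\gamma=s+1$. Tsuji's upper estimate \eqref{e:up_est_prod} then gives $\log^+|P_s(z)|\le C\phi_\gamma(1-|z|)$, so $m_p(r,\log^+|P_s|)\le C\phi_\gamma(1-r)$ uniformly in $p$, already matching the main factor in \eqref{e:mp_est_car}. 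For $\log^-|P_s|$, Tsuji's lower estimate \eqref{e:low_est_prod} combined with the same lemma gives $\log^-|P_s(z)|\le CK\log(1/(1-|z|))\,\phi_\gamma(1-|z|)$ for $z$ outside the exceptional set $\mathcal E:=\bigcup_m D_m$, supplying the extra logarithmic factor in \eqref{e:mp_est_car}.

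The main technical obstacle is controlling the contribution of the exceptional arcs $A_r:=\{\theta:re^{i\theta}\in\mathcal E\}$ to $m_p(r,\log^-|P_s|)$ uniformly in $p\ge 1$. On such an arc lying inside some $D_m$ I factor $P_s=E_m\cdot Q_m$, where $Q_m$ is the subproduct omitting the $m$-th term; since $D_m$ has radius $(1-|z_m|^2)^{s+4}$, which is much smaller than $1-|z_m|$, the point $z$ stays off the exceptional discs of $Q_m$, so \eqref{e:low_est_prod} applied to $Q_m$ controls $\log^-|Q_m(z)|$ pointwise, while $E_m(z)$ differs from $(z-z_m)/(1-|z_m|)$ by a bounded multiplicative factor. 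Hence $\log^-|P_s(z)|\le -\log|z-z_m|+\mathrm{bounded}$ on the $m$-th bad arc, of length $\lesssim(1-|z_m|)^{s+4}$, and the local $L^p$-integral reduces to $\int_0^{(1-|z_m|)^{s+4}}|\log t|^p\,dt$. The crux is to sum these contributions over $m$ and extract the $p$-th root so that the result is absorbed into the right-hand side of \eqref{e:mp_est_car} uniformly in $p$; I intend to exploit the slack between $(1-|z_m|)^{s+4}$ and $(1-|z_m|)^{s+1}$ together with the Carleson hypothesis.

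For part~(ii), monotonicity of $m_p$ in $p$ combined with the uniform-in-$p$ hypothesis forces $\|\log|B(re^{i\cdot})|\|_{L^\infty}\le K(1-r)^{1-\gamma}$. Hence for $w_0=(1-\delta)e^{i\vfi}$ that is not a zero of $B$ we have $-\log|B(w_0)|\le K\delta^{1-\gamma}$. For any $z_k\in S(\vfi,\delta)$ the geometric estimates $1-|w_0|^2\asymp\delta$ and $|1-\bar z_k w_0|\asymp\delta$ give
\[
1-\Bm\frac{z_k-w_0}{1-\bar z_k w_0}\Bm^{2}=\frac{(1-|z_k|^2)(1-|w_0|^2)}{|1-\bar z_k w_0|^2}\ge c\,\frac{1-|z_k|}{\delta},
\]
and the elementary inequality $-\log(1-u)\ge u$ yields $-\log|(z_k-w_0)/(1-\bar z_k w_0)|\ge c(1-|z_k|)/(2\delta)$. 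Summing over $z_k\in S(\vfi,\delta)$ and using the $L^\infty$-bound at $w_0$,
\[
\sum_{z_k\in S(\vfi,\delta)}(1-|z_k|)\le\frac{2\delta}{c}\bigl(-\log|B(w_0)|\bigr)\le C\delta^{2-\gamma}.
\]
Since $\gamma\in(0,1]$ and $\delta\in(0,1)$, $\delta^{2-\gamma}\le\delta^{\gamma}$, which is exactly \eqref{e:carl_cond} for $s=0$. The corner case $B(w_0)=0$ is handled by perturbing $\vfi$ slightly and enlarging $\delta$ by a fixed factor; a dyadic argument then propagates the bound to every $(\vfi,\delta)$.
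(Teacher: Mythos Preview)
Your proposal is addressing the wrong statement. The displayed statement is the cited Tsuji result (Theorem~\ref{t:tsu_low}); the paper does not prove it, it merely quotes it from \cite{Tsuji}. What you have written is an outline of a proof of Theorem~\ref{t:carl}. Assuming that is what you intended, here is how your sketch compares with the paper's Section~4.

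\medskip
\noindent\textbf{Part (i).} Your high-level plan---use Lemma~\ref{l:can_prod_est_com_mes} to convert \eqref{e:carl_cond} into a pointwise bound on $\Sigma(z)$, then apply \eqref{e:up_est_prod} and \eqref{e:low_est_prod}---matches the paper. The divergence is in how the exceptional set $\bigcup_m D_m$ is handled. The paper localises first: it cuts the circle $|z|=r$ into $\asymp(1-r)^{-1}$ arcs of length $2(1-r)$, and on each arc it collects \emph{all} zeros whose discs $D_m$ meet that arc into a finite set $F(r)$ with $|F(r)|\le C(1-r)^{\gamma-s-1}$ by \eqref{e:carl_cond}. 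It then factors $P_s=B_1B_2B_3$ with $B_1$ the subproduct over $m\notin F(r)$ (so \eqref{e:low_est_prod} applies to $B_1$ on the whole arc), $B_2$ the exponential tails of the bad factors, and $B_3$ the finite Blaschke product of the bad factors, and estimates the $L^p$ integral of each on the arc directly.

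Your plan of peeling off a \emph{single} factor $E_m$ on each bad arc and applying \eqref{e:low_est_prod} to the remainder $Q_m$ has a genuine gap: nothing prevents several discs $D_m,D_k$ from overlapping (e.g.\ clustered or repeated zeros), so a point $z\in D_m$ may still lie in the exceptional set for $Q_m$, and \eqref{e:low_est_prod} is not available for $Q_m$ there. The paper's device of removing all bad zeros for a given arc simultaneously is exactly what repairs this. You also leave the summation over $m$ and the $p$-uniform extraction as an ``intention''; the paper's arc-by-arc estimate with the explicit cardinality bound $|F(r)|$ is what makes that step routine.

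\medskip
\noindent\textbf{Part (ii).} Your argument is essentially correct and close to the paper's, but the mechanisms differ. The paper never passes to an $L^\infty$ bound on $\log|B|$ (which is delicate on circles meeting $Z$); instead it proves the pointwise lower bound $|\log|B(re^{i\varphi})||\ge c\,\lambda(\varphi,r)/(1-r)$ via the kernel $K(z,\zeta)$, observes that $\lambda(\cdot,r)$ is piecewise constant and hence attains its maximum on an interval of positive length, and only then lets $p\to\infty$ in the resulting inequality for $(\int\lambda^p)^{1/p}$. Your route---let $p\to\infty$ first to get $\|\log|B(r e^{i\cdot})|\|_\infty\le K(1-r)^{1-\gamma}$ and then evaluate at a single non-zero point $w_0$---also works, but needs the perturbation you mention both to avoid $w_0\in Z$ and to avoid the countably many radii $r=|z_k|$ on which $\log|B|$ is unbounded. (Incidentally, with the hypothesis as literally stated you obtain $\sum(1-|z_k|)\le C\delta^{2-\gamma}$, which you then weaken to $\delta^{\gamma}$; the paper's displayed bound gives $\delta^{\gamma}$ directly.)
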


Note that  the following arguments essentially repeat that from \cite[Lemma 1]{Li_mp}.

%
%
%
We first suppose that  $\gamma<s+1$. Then, let   $s\in \N$.
We have to prove that
\begin{equation}\label{e:mp_est}
    \int_0^{2\pi} |\log |P_s(\rr\th)||^p\, d\th\le C_7^p \frac {\log ^p \frac 1{1-r}}{(1-r)^{p(s+1-\gamma)}}.
\end{equation}
We deal with the integral in \eqref{e:mp_est} by covering the range of integration by $[\pi/(1-r)]+1$ intervals of the form $[\tau+r-1, \tau+1-r]$ for $\tau=2k(1-r)$ and $k\in \{ 0, \dots, [\pi/(1-r)]\}$, showing that
\begin{equation}\label{e:mp_est_inter}
    \int\limits_{\tau +r-1}^{\tau+1-r} |\log |P_s(\rr\th)||^p\, d\th\le C_8^p  (1-r)^{-p(s+1-\gamma)+1} \log^p \frac 1{1-r}
\end{equation}
for each $\tau$, where the constant $C_8$ is independent of $\tau$. For convenience and without loss of generality, we may  suppose that $\tau=0$ and  $\frac 34 \le |z_m|<1$.
For given $r$, let $\gamma_r=\{z=\rr \th : r-1\le \th\le 1-r\}$, and $F(r)=\{m:D_m\cap \gamma_r\ne \emptyset\}$, where $D_m$  are the exceptional discs of Theorem \ref{t:tsu_low}. From the definition of the discs $D_m$ and assumptions on $(z_m)$ it follows that
$ 1-4^{-3} \le \frac {1-r}{1-|z_m|}\le 1+4^{-3}$. Hence $\sum_{z_m\in F(r)} (1-|z_m|)^{s+1} \ge \frac {(1-r)^{s+1}}{2^{s+1}} |F(r)|$, where $|F(r)|$ denotes the number of elements in the set $F(r)$.  Thus, by \eqref{e:carl_cond}, we have
\begin{equation}\label{e:E_card}
    |F(r)|\le C_9(1-r)^{\gamma -1-s}.
\end{equation}
 We consider the factorization $P_s=B_1B_2B_3$, where
\begin{equation*}
\begin{split}    B_1(z)&=\prod_{m\not\in F(r)}
E\Bigl( \frac{1-|z_m|^2}{1-\bar z_m z},s\Br, \\
B_2(z)&=\prod_{m\in F(r)} \exp \sum_{j=1}^s \frac 1j \Bl \frac {1-|z_m|^2}{1-z\bar z_m}\Br ^j,  \\
B_3(z)&=\prod_{m\in F(r)} \Bl 1 -\frac {1-|z_m|^2}{1-z\bar z_m}\Br =\prod_{m\in F(r)} \Bl \frac {\bar z_m(z_m -z)}{1-z\bar z_m}\Br.
\end{split}\end{equation*}

First we note that  Theorem \ref{t:tsu_low} and Lemma \ref{l:can_prod_est_com_mes} give
\begin{gather}\nonumber
\int_{r-1}^{1-r} |\log |B_1(\rr\th)||^p d\th  \le \int_{r-1}^{1-r} C_{10}^p \log ^p \frac 1{1-r} \biggl( \sum_m  \Bm \frac {1-|z_m|^2}{1-\rr\th\bar z_m}\Bm^{s+1} \biggr)^p\, d\th  \\
\le C_{10}^p\log ^p \frac 1{1-r} \frac{1}{(1-r)^{p(s+1-\gamma)}} 2(1-r)= \frac {C^p_{11}(s, \gamma) \log^p \frac 1{1-r}}{(1-r)^{p(s+1-\gamma)-1}} .\label{e:B1_est}
    \end{gather}
    Next, the inequality $|1-z\bar z_m|>\frac 12 (1-|z_m|^2)$ yields
    \[
    |\log|B_2(z)||< \sum_{m\in F(r)} \sum_{j=1}^s \frac 1j \Bm \frac {1-|z_m|^2}{1-z\bar z_m}\Bm^{j} \le C_{12} |F(r)|.
    \]
    Hence \eqref{e:E_card} implies
\begin{equation}\label{e:B2_est}
    \int_{r-1}^{1-r} |\log |B_2(\rr\th)||^p d\th\le C_{13}^p  (1-r)^{1-p(s+1-\gamma)}.
\end{equation}
Finally, in \cite[p.124]{Li_mp} it is proved that
\begin{equation}\label{e:B3_est}
    \int_{r-1}^{1-r} |\log |B_3(\rr\th)||^p d\th\le C_{14}|F(r)|^p (1-r)
\end{equation}
Inequality  \eqref{e:mp_est_inter} now follows from \eqref{e:B1_est}--\eqref{e:B3_est}.

In the case  $s=0$  the only difference in the proof is that there is no product $B_2$, and $|B_1(z)|\le (\prod_m |z_m|)^{-1}$.

We now suppose that $\gamma=s+1$. In this case $|F(r)|$ is bounded uniformly in $r$. Instead of \eqref{e:B1_est}, using Lemma
\ref{l:can_prod_est_com_mes}, we obtain
\begin{align}\nonumber
\int_{r-1}^{1-r} |\log |B_1(\rr\th)||^p d\th & \le \int_{r-1}^{1-r} C_{10}^p \log ^p \frac 1{1-r} \biggl( \sum_m  \Bm \frac {1-|z_m|^2}{1-\rr\th\bar z_m}\Bm^{s+1} \biggr)^p\, d\th  \\
&\le 2C_{10}^p\log ^{2p} \frac 1{1-r} (1-r) .\label{e:B1_est_extrem}
    \end{align}
Hence, $m_p(r, \log |P_s|)=O\Bl \log^2 (1-r)\Br$ as $r\uparrow 1$.

We now prove ii).
Consider the function $$
K(z, \zeta)=\frac 1{1-|\zeta|} \log \Bm \frac {1-z\bar \zeta}{z-\zeta}\Bm, \quad z\in \D, \zeta \in \overline{\D}.
$$
This function has many nice properties. It is nonnegative. Moreover, ($\zeta=\rho e^{i\theta}$, $z=re^{i\vfi}$)
\begin{equation}\label{e:b_prop_K}
K(z, \zeta)=
\frac 1{2(1-\rho)} \log \Bl 1+\frac {(1-r^2)(1-\rho^2)}{r^2-2r\rho \cos(\varphi-\theta)+ \rho^2} \Br,
\end{equation}
and  therefore
$$ \lim_{\rho\to 1-} K(z, \rho e^{i\theta}) =\frac {1-|z|^2}{|\rho e^{i\theta}-z|^2}.$$
We need the following property
\begin{equation}
\label{e:e_prop_K}|K(z, \zeta)|\ge \frac 1{12} \frac{1-|z|^2}{|z-\zeta|^2}, \quad 1-|\zeta|\le \frac12 (1-|z|). \end{equation}
Indeed, since $\log(1+x)\ge \frac x{1+x} $, $x\in (0,1)$, using \eqref{e:b_prop_K}, we deduce that
\begin{equation}\label{e:two_star}
 |K(z, \zeta)| \ge\frac{1+\rho }{2} \frac{1-r^2}{|z-\zeta|^2} \frac{1}{1+\frac{(1-r^2)(1-\rho^2)}{|z-\zeta|^2}}.
\end{equation}
The condition $1-|\zeta|\le \frac12 (1-|z|) $ yields that $|\zeta|\ge \frac12$, and
$$ |z-\zeta|\ge 1-|z|-(1-|\zeta|)\ge \frac{1-|z|}2.$$
Therefore \begin{gather*}
           \frac{1+\rho }{2} \frac{1-r^2}{|z-\zeta|^2} \frac{1}{1+\frac{(1-r^2)(1-\rho^2)}{|z-\zeta|^2}} \ge \frac32
\frac 1{1+\frac{1-\rho^2}{1-r}4(1+r)} \ge\\ \ge \frac 34 \frac{1}{1+2(1+\rho)(1+r)}\ge 
\frac 1{12}.
          \end{gather*}
Inequality \eqref{e:e_prop_K} is proved.

Then we can write that
$ \log|B(z)|= -\sum_{z_k\in Z} K(z, z_k) (1-|z_k|)+ \sum_k \log|z_k| $.

Using \eqref{e:e_prop_K}, we obtain
$$ |\log|B(\rr\theta)||\ge \sum_{z_k\in S(\vfi, \frac{1-r}{2})} K(\rr\vfi, \zeta) (1-|z_k|)\ge
 \sum_{z_k\in S(\vfi, \frac{1-r}{2})} \frac{(1-r^2)(1-|z_k|)}{12|\rr\vfi -z_k|^2}.$$
Elementary geometric arguments  show that $|\rr\vfi -\rho e^{i\theta}| \le |\rr\vfi -e^{i\theta}|$ for $1>\rho\ge r\ge 0$.
It then follows that
\begin{gather*}
 |\log|B(\rr\theta)||\ge\frac 1{12} \sum_{z_k\in S(\vfi, \frac{1-r}{2})} \frac{1-r^2}{|\rr\vfi -e^{i\theta}|^2}(1-|z_k|) \ge \\ \ge \frac 1{3(\frac{\pi^2}{4}+1)} \frac{1-r^2}{(1-r)^2} \sum_{z_k\in S(\vfi, \frac{1-r}{2})} (1-|z_k|)\ge \frac{\sum_{z_k\in S(\vfi, \frac{1-r}{2})} (1-|z_k|)}{3(\frac{\pi^2}{4}+1)(1-r)}.
\end{gather*}
 Recall that $\lambda(\vfi, r)=\sum_{z_k\in S(\vfi, \frac{1-r}2)} (1-|z_k|)$.  From the definition of $S(\vfi,\delta)$ it follows that for fixed $r$ the function $\lambda(\vfi, r)$ is piecewise constant and continuous from the right.  Therefore it attains its maximum on some interval $[\vfi_1(r), \vfi_2(r))$, $\vfi_2(r)>\vfi_1(r)$.
By the assumption of the theorem we deduce that
$$ \frac{C_1}{(1-r)^{1-\gamma}} \ge \biggl(\inp |\log|B(\rr\vfi)|| ^p \, d\vfi\biggr)^{\frac 1p}\ge C_{15} \frac{\biggl(\inp \Bl \lambda(\vfi, r)\Bigr)^p\, d\vfi\biggr)^{\frac 1p} }{1-r}.$$
Hence
$$ \max _\vfi \lambda(\vfi,r) (\vfi_2(r)-\vfi_1(r))^{\frac 1p}\le \Bl\inp \lambda^p (\vfi,r)\, d\vfi\Br^{\frac 1p} \le \frac {C_{1}}{C_{15}} (1-r)^{\gamma }.$$
Tending $p \to \infty$ we obtain the assertion of ii).

This paper was inspired by the ``Conference on Blaschke Products and their Applications'' (Fields Institute, Toronto, July 25–29, 2011) organized by   Javad Mashreghi and Emmanuel Fricain.  I would like to thank the organizers and the staff of the Fields Institute for hospitality and financial support.

\bibliographystyle{amsplain}

Faculty of Mechanics and Mathematics,

Ivan Franko National University of Lviv,

Universytets'ka 1,  79000, Lviv,  Ukraine

chyzhykov@yahoo.com

Subjclass 2010: {Primary 30J99; Secondary  30D35, 30H15, 37A45}
\end{document}